\newtheorem{theorem}{Theorem}[section]
\newtheorem{corollary}{Corollary}[section]
\newtheorem{claim}{Claim}[section]
\def\M{{\mathfrak M}\, }
\def\G{{\mathfrak G}\, }
\def\R{{\mathfrak R}\, }
\begin{document}

\begin{center}{\bf \LARGE Additivity of $n$-Multiplicative Mappings of
Gamma Rings}\\
\vspace{.2in}
\noindent {\bf Aline Jaqueline de Oliveira Andrade}\\
{\it Federal University of ABC,\\
dos Estados Avenue, 5001,\\
09210-580, Santo Andr\'{e}, Brazil.}\\
e-mail: alinejaqueline.o.a@gmail.com\\
\vspace{.2in}
\noindent {\bf Gabriela Cotrim de Moraes}\\
{\it Federal University of ABC,\\
dos Estados Avenue, 5001,\\
09210-580, Santo Andr\'{e}, Brazil.}\\
e-mail: gabriela.moraes@ufabc.edu.br\\
\vspace{.2in}
\noindent {\bf Ruth Nascimento Ferreira}\\
{\it Federal University of Technology,\\
Professora Laura Pacheco Bastos Avenue, 800,\\
85053-510, Guarapuava, Brazil.}\\
e-mail: ruthnascimento@utfpr.edu.br\\
\vspace{.2in}
\noindent {\bf Bruno Leonardo Macedo Ferreira}\\
{\it Federal University of Technology,\\
Professora Laura Pacheco Bastos Avenue, 800,\\
85053-510, Guarapuava, Brazil.}\\
e-mail: brunolmfalg@gmail.com\\ 
e-mail: brunoferreira@utfpr.edu.br
\end{center}

\begin{abstract}
In this paper, we address the additivity of $n$-multiplicative isomorphisms and $n$-multiplicative derivations on Gamma rings. We proved that, if  $\M$ is a $\Gamma$-ring satisfying the some conditions, then any $n$-multiplicative isomorphism $\left(\varphi, \phi\right)$ of $\M$ onto an arbitrary
gamma ring is additive.
\end{abstract}
{\bf 2010 Mathematics Subject Classification.} 16Y99, 16N60. 
\\
\textbf{Keywords:} $n$-multiplicative isomorphisms, $n$-multiplicative derivations, Additivity, Gamma rings.

\newpage

\section{Introduction and Preliminaries}

A ring of endomorphism of a module plays a very important role in many parts
of mathematics, the property of ring itself is also clarified when we consider it as a ring of endomorphisms of a module. As a generalization of this idea, we can consider a set of homomorphisms of a module to another module which is closed under the addition and subtraction defined naturally but has no more a structure of a ring since we cannot define the product. However, suppose that we have an additive group $M$ consisting of homomorphisms of a module $A$ to a module $B$ and that we have also an additive group $N$ consisting of homomorphisms of $B$ to $A$. In this case we can define the product of three elements $f_1, g$ and $f_2$ where $f_1$ and $f_2$ are elements of $M$ and $g$
is an element of $N$. If this product $f_1gf_2$ is also an element of $M$ for every $f_1,g$ and
$f_2$, we say that $M$ is closed under the multiplication using $N$ between. Similarly, we can define that $N$ is closed under the multiplication using $M$ between. In this case let be $f_1, f_2$
and $f_3$ in $M$ and $g_1$ and $g_2$ in $N$ then we have
$$(f_1 g_1 f_2)g_2f_3 = f_1 g_1 (f_2g_2f_3) = f_1 (g_1 f_2g_2)f_3.$$
Motivated by this observation Nobusawa \cite{nobu} considered this situation abstractly and introduced the notion of $\Gamma$-rings as follows:
Let $\M$ be an additive group whose elements are denoted by $x,y,z,\ldots $, and $\Gamma$ be another additive group whose elements are $\alpha, \beta, \gamma,\ldots $. Suppose that $x\gamma y$ is defined to be an element of $\M$ and that $\gamma x \beta $ is defined to be an element of $\Gamma$ for every $x, y \in \M$ and $\gamma, \beta \in \Gamma$. If the products satisfy
the following three conditions 
\begin{enumerate}
\item[\it(i)] $(x+y)\alpha z= x\alpha z + y\alpha z,~ x\alpha (y+z)= x\alpha y + x\alpha z,~ x(\alpha +\beta)y= x\alpha y + x\beta y,$
\item[\it (ii)] $(x\alpha y)\beta z= x\alpha (y \beta z) = x(\alpha y \beta) z,$
\item[\it(iii)] $x \gamma y =0$ implies $\gamma = 0$,
\end{enumerate}
for all $x,y,z\in \mathfrak{M}$ and $\alpha ,\beta, \gamma \in \Gamma$, then $\M$ is called a $\Gamma$-ring.
Barnes \cite{barnes} weakened
the above conditions and defined the $\Gamma$-rings as follows: 
Let $\mathfrak{M}$ and $\Gamma$ be two abelian groups. We call $\mathfrak{M}$ a {\it $\Gamma$-ring} if the following conditions are satisfied:
\begin{enumerate}
\item[\it (i)] $x\alpha y\in \mathfrak{M},$
\item[\it(ii)] $(x+y)\alpha z= x\alpha z + y\alpha z,~ x\alpha (y+z)= x\alpha y + x\alpha z,$
\item[\it (iii)] $x(\alpha +\beta)y= x\alpha y + x\beta y,$
\item[\it(iv)] $(x\alpha y)\beta z= x\alpha (y \beta z),$
\end{enumerate}
for all $x,y,z\in \mathfrak{M}$ and $\alpha ,\beta \in \Gamma.$
A nonzero element $1\in \mathfrak{M}$ is called a multiplicative $\gamma $-identity of $\mathfrak{M}$ or {\it $\gamma $-unity element} (for some $\gamma \in \Gamma$) if $1\gamma x=x\gamma 1=x$ for all $x\in \mathfrak{M}$. A nonzero element $e_{1}\in \mathfrak{M}$ is called a {\it $\gamma_{1} $-idempotent} (for some $\gamma _{1}\in \Gamma$) if $e_{1}\gamma _{1}e_{1}=e_{1}$ and a {\it nontrivial $\gamma_{1}$-idempotent} if it is a $\gamma_{1} $-idempotent different from multiplicative $\gamma_{1} $-identity element of $\mathfrak{M}$.

Let $\Gamma $ and $\mathfrak{M}$ be two abelian groups such that $\mathfrak{M}$ is a $\Gamma$-ring and $e_{1}\in\mathfrak{M}$ a nontrivial $\gamma_{1}$-idempotent. Let us consider $e_2 \colon \Gamma \times\mathfrak{M}\rightarrow\mathfrak{M}$ and $e'_2 \colon \mathfrak{M}\times \Gamma \rightarrow\mathfrak{M}$ two $\mathfrak{M}$-additive maps verifying the conditions $e_2 (\gamma_{1}, a)=a-e_{1}\gamma_{1}a$ and $e_2'(a,\gamma_{1})=a-a\gamma_{1}e_{1}$. Let us  denote $e_2\alpha a=e_2 (\alpha, a)$, $a \alpha e_2=e_2'(a,\alpha)$, $1_{1}\alpha a=e_1\alpha a+e_2\alpha a$, $a\alpha 1_{1}=a\alpha e_1+a\alpha e_2$ and suppose $(a\alpha e_{2})\beta b=a \alpha (e_{2}\beta b)$ for all $a,b\in \mathfrak{M}$ and $\alpha, \beta \in \Gamma$. Then $1_{1}\gamma_{1} a=a\gamma_{1} 1_{1}=a$ and $(a\alpha 1_{1})\beta b=a \alpha (1_{1}\beta b)$, for all $a,b\in \mathfrak{M}$ and $\alpha, \beta \in \Gamma$, allowing us to write $1_{1}=e_{1}+e_{2}$ and $\mathfrak{M}$ as a direct sum of subgroups $\mathfrak{M}=\mathfrak{M}_{11}\oplus \mathfrak{M}_{12}\oplus \mathfrak{M}_{21}\oplus \mathfrak{M}_{22},$ where $\mathfrak{M}_{ij}=e_{i}\gamma _{1}\mathfrak{M}\gamma _{1}e_{j}$ $(i,j=1,2),$ called {\it Peirce decomposition} of $\mathfrak{M}$ relative to $e_{1}$, satisfying the
multiplicative relations:
\begin{enumerate}
\item[\it (i)] $\mathfrak{M}_{ij}\Gamma \mathfrak{M}_{kl}\subseteq\mathfrak{M}_{il}$ $(i,j,k,l=1,2)$;
\item[\it (ii)] $\mathfrak{M}_{ij}\gamma_{1} \mathfrak{M}_{kl}=0$ if $j\neq k$ $(i,j,k,l=1,2)$.
\end{enumerate}

If $\mathfrak{A}$ and $\mathfrak{B}$ are subsets of a $\Gamma$-ring $\mathfrak{M}$ and $\Theta \subseteq \Gamma,$ we denote $\mathfrak{A}\Theta \mathfrak{B}$ the subset of $\mathfrak{M}$ consisting of all finite sums of the form $\sum_{i} a_{i}\gamma _{i}b_{i}$ where $a_{i}\in \mathfrak{A},$ $\gamma _{i}\in \Theta$ and $b_{i}\in \mathfrak{B}$. A {\it right ideal (resp., left ideal)} of a $\Gamma$-ring $\mathfrak{M}$ is an additive subgroup $\mathfrak{I}$ of $\mathfrak{M}$ such that $\mathfrak{I}\Gamma \mathfrak{M}\subseteq \mathfrak{I}$ (resp., $\mathfrak{M}\Gamma \mathfrak{I}\subseteq \mathfrak{I}$). If $\mathfrak{I}$ is both a right and a left ideal of $\mathfrak{M}$, then we say that $\mathfrak{I}$ is an {\it ideal} or {\it two-side ideal} of $\mathfrak{M}$.

An ideal $\mathfrak{P}$ of a $\Gamma$-ring $\mathfrak{M}$ is called {\it prime} if for any ideals $\mathfrak{A},\mathfrak{B}\subseteq \mathfrak{M}$, $\mathfrak{A}\Gamma \mathfrak{B}\subseteq \mathfrak{P}$ implies that $\mathfrak{A}\subseteq \mathfrak{P}$ or $\mathfrak{B}\subseteq \mathfrak{P}$. A $\Gamma$-ring $\mathfrak{M}$ is said to be {\it prime} if the zero ideal is prime.

\begin{theorem} If $\mathfrak{M}$ is a $\Gamma$-ring, the following conditions are equivalent:
\begin{enumerate}
\item[{\it (i)}] $\mathfrak{M}$ is a prime $\Gamma$-ring;
\item[{\it (ii)}] if $a,b\in \mathfrak{M}$ and $a\Gamma\mathfrak{M}\Gamma b = 0$, then $a=0$ or $b=0$.
\end{enumerate}
\end{theorem}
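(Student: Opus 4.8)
The plan is to prove the two implications separately, the easy direction being $(ii)\Rightarrow(i)$ and the substantive one $(i)\Rightarrow(ii)$. For $(ii)\Rightarrow(i)$, suppose $(ii)$ holds and let $\mathfrak{A},\mathfrak{B}$ be ideals with $\mathfrak{A}\Gamma\mathfrak{B}=0$ and $\mathfrak{A}\neq 0$. Picking a nonzero $a\in\mathfrak{A}$ and an arbitrary $b\in\mathfrak{B}$, I would observe that $a\Gamma\mathfrak{M}\subseteq\mathfrak{A}$ since $\mathfrak{A}$ is a right ideal, so that $a\Gamma\mathfrak{M}\Gamma b\subseteq\mathfrak{A}\Gamma\mathfrak{B}=0$; condition $(ii)$ together with $a\neq 0$ then forces $b=0$, whence $\mathfrak{B}=0$. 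Thus the zero ideal is prime.

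For $(i)\Rightarrow(ii)$, assume $\mathfrak{M}$ is prime and $a\Gamma\mathfrak{M}\Gamma b=0$. The first step is to pass from elements to ideals. I would check that $\mathfrak{A}=\mathfrak{M}\Gamma a\Gamma\mathfrak{M}$ and $\mathfrak{B}=\mathfrak{M}\Gamma b\Gamma\mathfrak{M}$ are two-sided ideals, using $\mathfrak{M}\Gamma\mathfrak{M}\subseteq\mathfrak{M}$ and the associativity law $(x\alpha y)\beta z=x\alpha(y\beta z)$. Then, regrouping a typical product as $(m_1\alpha a\beta m_2)\gamma(m_3\delta b\epsilon m_4)=m_1\alpha\bigl(a\beta(m_2\gamma m_3)\delta b\bigr)\epsilon m_4$ and invoking $a\Gamma\mathfrak{M}\Gamma b=0$ (note $m_2\gamma m_3\in\mathfrak{M}$), I conclude $\mathfrak{A}\Gamma\mathfrak{B}=0$. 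Primeness then yields $\mathfrak{M}\Gamma a\Gamma\mathfrak{M}=0$ or $\mathfrak{M}\Gamma b\Gamma\mathfrak{M}=0$.

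The heart of the argument is the implication $\mathfrak{M}\Gamma x\Gamma\mathfrak{M}=0\Rightarrow x=0$ in a prime $\Gamma$-ring. Since $\mathfrak{M}$ need not possess a $\gamma$-unity, one cannot simply sandwich $x$ between identities; instead I would exploit primeness repeatedly through square-zero ideals. First, $\mathfrak{M}\Gamma x$ is a left ideal and, because $(\mathfrak{M}\Gamma x)\Gamma\mathfrak{M}=\mathfrak{M}\Gamma x\Gamma\mathfrak{M}=0$, it is in fact two-sided; regrouping $(m_1\alpha x)\beta(m_2\gamma x)=(m_1\alpha x\beta m_2)\gamma x=0$ shows $(\mathfrak{M}\Gamma x)\Gamma(\mathfrak{M}\Gamma x)=0$, so primeness gives $\mathfrak{M}\Gamma x=0$. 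Symmetrically, $x\Gamma\mathfrak{M}$ is a two-sided ideal with $(x\alpha m_1)\beta(x\gamma m_2)=x\alpha(m_1\beta x\gamma m_2)=0$, so $(x\Gamma\mathfrak{M})\Gamma(x\Gamma\mathfrak{M})=0$ and $x\Gamma\mathfrak{M}=0$. These two facts make $\mathbb{Z}x$ a two-sided ideal, and $x\Gamma x\subseteq x\Gamma\mathfrak{M}=0$ gives $(\mathbb{Z}x)\Gamma(\mathbb{Z}x)=0$, so a last application of primeness forces $x=0$. Applying this lemma to whichever of $a,b$ the previous step annihilates yields $a=0$ or $b=0$, which is $(ii)$.

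The main obstacle I anticipate is precisely this last lemma: without a unity element the naive ``plug in $1$'' trick is unavailable, and the two-sided ideals generated by $a$ and $b$ themselves cannot be used directly, since their product contains terms such as $a\gamma b$ that need not vanish under the hypothesis. The device that makes everything go through is to work instead with the one-sided products $\mathfrak{M}\Gamma x$ and $x\Gamma\mathfrak{M}$, whose self-products collapse to $0$ by $\mathfrak{M}\Gamma x\Gamma\mathfrak{M}=0$ together with associativity, thereby converting the annihilation hypothesis into a statement about square-zero ideals to which primeness can be applied.
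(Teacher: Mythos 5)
Your proof is correct, but there is nothing in the paper to compare it against: the paper states this theorem in its preliminaries without any proof, quoting it as a known characterization of prime $\Gamma$-rings. Judged on its own merits, your argument is complete. The direction $(ii)\Rightarrow(i)$ is handled correctly. For $(i)\Rightarrow(ii)$, passing to the two-sided ideals $\mathfrak{M}\Gamma a\Gamma\mathfrak{M}$ and $\mathfrak{M}\Gamma b\Gamma\mathfrak{M}$ is the right move, and your observation that the ideals generated by $a$ and $b$ themselves would not work (their product contains terms like $a\gamma b$ that the hypothesis does not control) is accurate. The heart of the matter is indeed the lemma that $\mathfrak{M}\Gamma x\Gamma\mathfrak{M}=0$ forces $x=0$ in a prime $\Gamma$-ring, which replaces the unavailable ``plug in $1$'' trick, and your three applications of primeness to square-zero ideals are all legitimate: $\mathfrak{M}\Gamma x$ and $x\Gamma\mathfrak{M}$ are two-sided ideals because the product on the offending side lands inside $\mathfrak{M}\Gamma x\Gamma\mathfrak{M}=0$, their self-products vanish by regrouping with the associativity axiom $(x\alpha y)\beta z = x\alpha(y\beta z)$, and once $\mathfrak{M}\Gamma x = x\Gamma\mathfrak{M} = 0$ the subgroup $\mathbb{Z}x$ becomes a two-sided ideal with $(\mathbb{Z}x)\Gamma(\mathbb{Z}x)\subseteq \mathbb{Z}(x\Gamma x) = 0$, so primeness (applied, as the paper's definition requires, to a pair of two-sided ideals) gives $x=0$. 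This is the standard proof of this equivalence in the $\Gamma$-ring literature, and it would be a legitimate addition to the paper, which currently leaves the statement unsupported.
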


\section{$n$-multiplicative isomorphisms and derivations}

The first result about the additivity of maps on rings was given by Martindale
III \cite{mart}. He established a condition on a ring such that every multiplicative isomorphism on a ring $\R$ is additive. Wang \cite{wang} also considered this question in the context
of $n$-multiplicative maps on associative rings satisfying Martindale’s conditions. Ferreira and Ferreira \cite{ferfer} considered that problem in the context of alternative rings. They investigated the problem of when a $n$-multiplicative isomorphism and a $n$-multiplicative derivation is additive for the class of alternative rings satisfying Martindale’s conditions.

Let $n > 1$ be a positive integer. Let $\M$, $\G$, $\Gamma$ and $\Delta$ abelian groups such that $\M$ is a $\Gamma$-ring and $\G$ is a $\Delta$-ring (not necessarily with identity elements). An ordered pair ($\phi$, $\varphi$) of mappings is called a $n$-multiplicative isomorphism of $\M$ onto $\G$ if they satisfy the following properties:
\begin{enumerate}
\item [(i)] $\phi$ is a bijective mapping from $\M$ onto $\G$;
\item [(ii)] $\varphi$ is a bijective mapping from $\Gamma$ onto $\Delta$;
\item [(iii)] $\phi(x_1\gamma_1x_2\gamma_2\cdots\gamma_{n-1}x_n) = \phi(x_1)\varphi(\gamma_1)\phi(x_2)\varphi(\gamma_2)\cdots \varphi(\gamma_{n-1})\phi(x_n)$, for all
$x_1, x_2, \ldots , x_n \in \M$ and $\gamma_1, \gamma_2, \ldots , \gamma_{n-1} \in \Gamma$. 
\end{enumerate}
We say that a $n$-multiplicative isomorphism of $\M$ onto $\G$ ($\phi$, $\varphi$) is additive when $\phi(x + y) = \phi(x) + \phi(y)$ for all $x, y \in \M$. In particular, if $n = 2$ ($\phi$, $\varphi$) is called a multiplicative isomorphism. We say that a map $d\colon M \rightarrow \M$ is a $n$-multiplicative derivation of $\M$ if 
$$d(x_1\gamma_1x_2\gamma_2 \cdots\gamma_{n-1}x_n) = \sum_{i=1}^{n} x_1\gamma_1\cdots\gamma_{i-1}d(x_i)\gamma_i\cdots\gamma_{n-1}x_n$$
for all $x_1, x_2, \ldots , x_n \in \M$ and $\gamma_1, \gamma_2, \ldots , \gamma_{n-1} \in \Gamma$. If $d(x\gamma y) = d(x)\gamma y+x\gamma d(y)$
for all $x, y \in \M$, we simply say that $d$ is a multiplicative derivation of $\M$. Ferreira and Ferreira in \cite{bruth1} and Ferreira in \cite{bruth2} obtained the followings theorems.

\begin{theorem}\label{passbru} Let $\mathfrak{M}$ be a $\Gamma$-ring containing a family $\{e_{\alpha}|\alpha\in \Lambda\}$ of $\gamma _{\alpha}$-idempotents which satisfies:
\begin{enumerate}
\item[\it (i)] For each $\alpha \in \Lambda$ there are two $\mathfrak{M}$-additive maps $f_{\alpha} \colon \Gamma \times\mathfrak{M}\rightarrow \mathfrak{M}$, \linebreak $f'_{\alpha} \colon \mathfrak{M}\times \Gamma \rightarrow \mathfrak{M}$ satisfying $f_{\alpha} (\gamma_{\alpha}, a)=a-e_{\alpha}\gamma_{\alpha}a$, $f'_{\alpha} (a,\gamma_{\alpha})=a-a\gamma_{\alpha}e_{\alpha}$, for all $a\in \mathfrak{M}$, such that if we denote $f_{\alpha}\beta a=f_{\alpha} (\beta, a)$, $a \beta f_{\alpha}=f'_{\alpha} (a,\beta)$, $1_{\alpha}\beta a=e_{\alpha}\beta a+f_{\alpha}\beta a$, $a\beta 1_{\alpha}=a\beta e_{\alpha}+a\beta f_{\alpha}$ (allowing us to write \linebreak $1_{\alpha}=e_{\alpha}+f_{\alpha}$), then $(a\beta f_{\alpha})\gamma b=a \beta (f_{\alpha}\gamma b)$ for all $a,b\in \mathfrak{M}$ and $\beta, \gamma \in \Gamma$;
\item[\it (ii)] If $x\in \mathfrak{M}$ is such that $x\Gamma \mathfrak{M}=0,$ then $x = 0$;
\item[\it (iii)] If $x\in \mathfrak{M}$ is such that $e_{\alpha}\Gamma\mathfrak{M}\Gamma x = 0$ for all $\alpha\in \Lambda,$ then $x = 0$ (and hence $\mathfrak{M}\Gamma x=0$ implies $x = 0$);
\item[\it (iv)] For each $\alpha \in \Lambda$ and $x\in \mathfrak{M},$ if $(e_{\alpha} \gamma _{\alpha} x \gamma _{\alpha} e_{\alpha})\Gamma\mathfrak{M}\Gamma(1_{\alpha}-e_{\alpha})=0$ then $e_{\alpha} \gamma _{\alpha} x \gamma _{\alpha} e_{\alpha} = 0.$
\end{enumerate}
Then any multiplicative isomorphism ($\varphi$, $\phi$) of $\M$ onto an arbitrary gamma ring or multiplicative derivation of $\M$ is additive.
\end{theorem}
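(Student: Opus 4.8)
The plan is to run a Martindale-type argument organised around the Peirce decomposition $\mathfrak{M}=\mathfrak{M}_{11}\oplus\mathfrak{M}_{12}\oplus\mathfrak{M}_{21}\oplus\mathfrak{M}_{22}$ relative to $e_{1}$, turning identities between products into identities between elements via the bijectivity of $(\phi,\varphi)$ together with the faithfulness and primeness hypotheses (ii)--(iv). First I would record the normalizations. From $0\gamma_{1}x=0$ one gets $\phi(0)=\phi(0)\varphi(\gamma_{1})\phi(x)$ for every $x$, and choosing $x$ with $\phi(x)=0$ (surjectivity) forces $\phi(0)=0$; a short computation then shows $\phi(e_{1})$ is a $\varphi(\gamma_{1})$-idempotent, so that $\{\phi(\mathfrak{M}_{ij})\}$ behaves like a Peirce decomposition of the target $\mathfrak{G}$. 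The device used throughout is this: to prove $\phi(u+v)=\phi(u)+\phi(v)$, choose $t$ with $\phi(t)=\phi(u)+\phi(v)$ by surjectivity, and then identify $t$ by testing it against $e_{1},e_{2}$ on both sides and against arbitrary elements, each time expanding by multiplicativity and cancelling by injectivity.

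The clean half is additivity across components. For arbitrary $a_{ij}\in\mathfrak{M}_{ij}$ choose $t$ with $\phi(t)=\sum_{i,j}\phi(a_{ij})$; the two-sided test gives $\phi(e_{i}\gamma_{1}t\gamma_{1}e_{j})=\phi(e_{i})\varphi(\gamma_{1})\big(\sum_{k,l}\phi(a_{kl})\big)\varphi(\gamma_{1})\phi(e_{j})$, which collapses to $\phi(a_{ij})$ because $e_{i}\gamma_{1}\mathfrak{M}_{kl}\gamma_{1}e_{j}$ survives only when $k=i$ and $l=j$. Hence $e_{i}\gamma_{1}t\gamma_{1}e_{j}=a_{ij}$ for every pair, and since $1_{1}=e_{1}+e_{2}$ and $1_{1}\gamma_{1}t\gamma_{1}1_{1}=t$, summing yields $t=\sum_{i,j}a_{ij}$. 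This proves $\phi\big(\sum_{i,j}a_{ij}\big)=\sum_{i,j}\phi(a_{ij})$ using only the normalizations and injectivity.

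The substance is additivity inside a single component. For the off-diagonal pieces, fixing $a_{12},b_{12}\in\mathfrak{M}_{12}$ the idempotent tests force the candidate $t$ into $\mathfrak{M}_{12}$; multiplying $t$ on one side by an arbitrary element, recombining the resulting summands by the cross-component additivity already established, and invoking the right faithfulness (ii) (and its left analogue furnished by (iii)) then pins down $t=a_{12}+b_{12}$, the case of $\mathfrak{M}_{21}$ being symmetric. The diagonal pieces are where the difficulty concentrates: for $a_{11},b_{11}\in\mathfrak{M}_{11}$ the tests only control the relevant element after it is sandwiched between $\mathfrak{M}$ and the complementary idempotent $1_{1}-e_{1}$, so one is left needing to deduce $e_{1}\gamma_{1}(t-a_{11}-b_{11})\gamma_{1}e_{1}=0$ from $\big(e_{1}\gamma_{1}(t-a_{11}-b_{11})\gamma_{1}e_{1}\big)\Gamma\mathfrak{M}\Gamma(1_{1}-e_{1})=0$, which is exactly hypothesis (iv); conditions (ii) and (iii) dispose of the remaining off-diagonal components, and $\mathfrak{M}_{22}$ is handled symmetrically. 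Assembling the within-component and cross-component statements along $x=\sum_{i,j}x_{ij}$ gives full additivity of $\phi$.

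For the derivation assertion I would run the same scaffold with the Leibniz identity $d(x\gamma y)=d(x)\gamma y+x\gamma d(y)$ in place of multiplicativity, proving $d$ additive on each $\mathfrak{M}_{ij}$ and across components by the identical test-and-cancel scheme and again calling on (ii)--(iv) for the diagonal corners. I expect the main obstacle to be precisely this diagonal step: unlike the off-diagonal components $\mathfrak{M}_{11}$ and $\mathfrak{M}_{22}$ carry no automatic annihilation, so the only available leverage is the prime-type hypothesis (iv), and the delicate point is the bookkeeping of which Peirce component each tested product lands in, so that (ii)--(iv) apply cleanly and no step in the induction becomes circular.
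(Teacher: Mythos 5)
Your scaffold --- Peirce decomposition relative to $e_{1}$, the surjectivity device $t=\phi^{-1}(\phi(u)+\phi(v))$, identification of $t$ component by component --- is the classical Martindale route, and most of it can indeed be carried out. But there is a genuine gap at exactly the point you flag as hardest: the claim that ``$\mathfrak{M}_{22}$ is handled symmetrically'' is false, because the hypotheses of the theorem are not left-right symmetric. Condition (iv) controls only the $(1,1)$-corner: from $(e_{\alpha}\gamma_{\alpha}x\gamma_{\alpha}e_{\alpha})\Gamma\mathfrak{M}\Gamma(1_{\alpha}-e_{\alpha})=0$ one may conclude $e_{\alpha}\gamma_{\alpha}x\gamma_{\alpha}e_{\alpha}=0$; the mirror statement your $\mathfrak{M}_{22}$ step would need, namely that $\bigl((1_{\alpha}-e_{\alpha})\gamma_{\alpha}x\gamma_{\alpha}(1_{\alpha}-e_{\alpha})\bigr)\Gamma\mathfrak{M}\Gamma e_{\alpha}=0$ forces the $(2,2)$-corner to vanish, is not among the hypotheses, and condition (iii) is likewise one-sided. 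So your $\mathfrak{M}_{11}$ argument (which correctly invokes (iv)) has no symmetric counterpart, and your final assembly along $x=\sum_{i,j}x_{ij}$, which requires additivity in all four corners, collapses. The natural patch (Martindale's trick: show $e_{1}\Gamma\mathfrak{M}\Gamma(t-a_{22}-b_{22})=0$ using the already established $\mathfrak{M}_{12}$-additivity, then quote (iii)) works only when $\Lambda=\{e_{1}\}$: for a general family, (iii) demands $e_{\alpha}\Gamma\mathfrak{M}\Gamma(\cdot)=0$ for \emph{every} $\alpha$, and for $\alpha\neq 1$ the products $e_{\alpha}\beta r\gamma a_{22}$ land in $\mathfrak{M}_{12}\oplus\mathfrak{M}_{22}$, so the argument becomes circular. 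The same defect afflicts your derivation paragraph, which also promises the diagonal corners ``by (ii)--(iv).''

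The architecture that actually closes the proof --- and it is the one the paper uses, deducing the present statement as the $n=2$ case of Theorem \ref{teo1} and Corollary \ref{deriv}, which rest on Theorem \ref{main} applied to the defect $f(x,\gamma,y)=\varphi^{-1}(\varphi(x+y)-\varphi(x)-\varphi(y))$, resp.\ $d(x+y)-d(x)-d(y)$ --- never touches $\mathfrak{M}_{21}$ or $\mathfrak{M}_{22}$ at all: one proves the defect vanishes on $\mathfrak{M}_{11}$-pairs, $\mathfrak{M}_{12}$-pairs and mixed diagonal/off-diagonal pairs, deduces that it vanishes on all pairs from $e_{\alpha}\Gamma\mathfrak{M}=\mathfrak{M}_{11}\oplus\mathfrak{M}_{12}$ (the decomposition taken relative to each $e_{\alpha}$ in turn), and then kills $f(x,\gamma,y)$ for arbitrary $x,y$ by showing $e_{\alpha}\Gamma\mathfrak{M}\Gamma f(x,\gamma,y)=0$ for every $\alpha\in\Lambda$ and invoking (iii). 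The asymmetry of (iii) and (iv) forces this asymmetric proof shape; it is not a stylistic choice. A secondary, repairable flaw in your write-up: $e_{2}$ is not an element of $\mathfrak{M}$ in this setting (it is only the pair of additive maps of hypothesis (i)), so $\phi(e_{2})$ and the tests $\phi(e_{i}\gamma_{1}t\gamma_{1}e_{j})$ with an index equal to $2$ are ill-formed; all such tests must be run against genuine elements $e_{2}\beta r,\ r\beta e_{2}\in\mathfrak{M}$ and closed with (ii) or (iii) rather than with injectivity alone. That can be fixed locally; the $\mathfrak{M}_{22}$ issue cannot, within your outline.
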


Inspired by the above-mentioned results in the present article we consider the same Wang’s problems in the context of $\Gamma$-rings. We are planning to extend Theorem \ref{passbru} to an arbitrary $n$-multiplicative isomorphism and $n$-multiplicative derivations in Section 3, for that we borrow the Wang's techniques in order to results presented are generalizations of the results of Wang to the class of $\Gamma$-rings. In
the Section 4, we give the applications of our main result.

\section{Main result}

We shall prove the following main result of this article.
\begin{theorem}\label{main} Let $k$ be a positive integer. Let $\M$ be a $\Gamma$-ring satisfying the
conditions $(i)-(iv)$ of Theorem \ref{passbru}. Suppose that $f \colon  \M \times \Gamma \times \M \rightarrow \M$ is a mapping such that:
\begin{enumerate}
\item [(v)] $f(x, \gamma, 0) = f(0, \gamma, x) = 0$ for all $x \in \M$ and $ \gamma \in \Gamma$,
\item [(vi)] $u_1\gamma_1u_2 \cdots  \gamma_{k-1}u_k\gamma_k f(x, \gamma, y) = f(u_1\gamma_1u_2 \cdots  \gamma_{k-1}u_k\gamma_k x, \gamma, u_1\gamma_1u_2 \cdots  \gamma_{k-1}u_k\gamma_k y)$,
\item [(vii)] $f(x, \gamma, y)\gamma_1 u_1 \gamma_2 u_2 \cdots  \gamma_k u_k = f(x\gamma_1 u_1\gamma_2 u_2 \cdots  \gamma_k u_k, \gamma, y\gamma_1 u_1\gamma_2 u_2 \cdots  \gamma_k u_k)$,
\end{enumerate}
for all $x, y, u_1, u_2, \ldots  , u_k \in \M$ and $\gamma, \gamma_1, \ldots  , \gamma_k \in \Gamma$. Then $f = 0$.
\end{theorem}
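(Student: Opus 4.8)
The plan is to fix $x,y\in\mathfrak{M}$ and $\gamma\in\Gamma$, set $z=f(x,\gamma,y)$, and show $z=0$. The computational engine is that a product of the idempotent $e_\alpha$ with itself collapses, $e_\alpha\gamma_\alpha e_\alpha=e_\alpha$, so that any $k$-fold product of copies of $e_\alpha$ joined by $\gamma_\alpha$ is again $e_\alpha$. Substituting $u_1=\cdots=u_k=e_\alpha$ and $\gamma_1=\cdots=\gamma_k=\gamma_\alpha$ into (vi) and (vii) therefore yields the localized identities $e_\alpha\gamma_\alpha z=f(e_\alpha\gamma_\alpha x,\gamma,e_\alpha\gamma_\alpha y)$ and $z\gamma_\alpha e_\alpha=f(x\gamma_\alpha e_\alpha,\gamma,y\gamma_\alpha e_\alpha)$. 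Keeping one slot free, i.e.\ taking $u_1=\cdots=u_{k-1}=e_\alpha$, $u_k=v$ (and the dual choice on the right), upgrades these to the padded identities $e_\alpha\delta v\gamma' z=f(e_\alpha\delta v\gamma' x,\gamma,e_\alpha\delta v\gamma' y)$ and $z\gamma' v\delta e_\alpha=f(x\gamma' v\delta e_\alpha,\gamma,y\gamma' v\delta e_\alpha)$ for every $v\in\mathfrak{M}$ and $\delta,\gamma'\in\Gamma$. I stress that in this $\Gamma$-ring setting only $e_\alpha$, and not its formal complement $f_\alpha=1_\alpha-e_\alpha$, is a genuine element of $\mathfrak{M}$, so every localization must be routed through $e_\alpha$; this is exactly the shape appearing in hypotheses (iii) and (iv) of Theorem~\ref{passbru}.

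The second ingredient is the vanishing hypothesis (v): in any of the identities above, if one of the two transformed arguments equals $0$, then the corresponding localized value of $z$ is $0$. Combined with the Peirce relations $\mathfrak{M}_{ij}\gamma_1\mathfrak{M}_{kl}=0$ for $j\neq k$, this produces concrete zeros. For instance, right-multiplying an argument lying in $\mathfrak{M}_{12}$ by $e_\alpha\in\mathfrak{M}_{11}$ sends it to $0$, so the corresponding one-sided localization of the $f$-value drops out by (v). Running this for the various row and column localizations clears all the off-diagonal and the $(2,2)$-type Peirce contributions of any value $f(A,\gamma,B)$ whose arguments $A,B$ have already been localized through $e_\alpha$.

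With these tools I reduce the whole problem, via hypothesis (iii), to a single assertion: it suffices to prove that $e_\alpha\delta v\gamma' z=0$ for all $\alpha$, all $v\in\mathfrak{M}$ and all $\delta,\gamma'\in\Gamma$, since then $e_\alpha\Gamma\mathfrak{M}\Gamma z=0$ for every $\alpha$ and (iii) forces $z=0$. By the padded identity this value equals $f(A,\gamma,B)$ with $A=e_\alpha\delta v\gamma' x$ and $B=e_\alpha\delta v\gamma' y$, two arguments that are row-$1$ relative to $e_\alpha$ (that is, $e_\alpha\gamma_\alpha A=A$ and $e_\alpha\gamma_\alpha B=B$). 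Once the previous step has annihilated the off-diagonal and $(2,2)$ pieces of $f(A,\gamma,B)$, what remains is the single diagonal value $e_\alpha\gamma_\alpha f(A,\gamma,B)\gamma_\alpha e_\alpha=f(a,\gamma,b)$ with $a,b\in\mathfrak{M}_{11}$, and this is precisely what condition (iv) is designed to kill: one checks that $f(a,\gamma,b)\,\Gamma\,\mathfrak{M}\,\Gamma\,(1_\alpha-e_\alpha)=0$, whence $f(a,\gamma,b)=0$, the residual right-faithfulness needed along the way being supplied by (ii).

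The main obstacle I anticipate is exactly the verification that this surviving diagonal value annihilates the $(1_\alpha-e_\alpha)$-direction, i.e.\ the checking of the hypothesis of (iv). Because $f_\alpha=1_\alpha-e_\alpha$ is not an element of $\mathfrak{M}$, one cannot simply invoke (vii) with $f_\alpha$ in a slot; instead one must rewrite $t\gamma''(1_\alpha-e_\alpha)=t\gamma'' 1_\alpha-t\gamma'' e_\alpha$ and transport the localization identities through the formal complement using the associativity-type relation $(a\beta f_\alpha)\gamma b=a\beta(f_\alpha\gamma b)$ guaranteed by hypothesis (i) of Theorem~\ref{passbru}. Keeping the Peirce bookkeeping straight through this step, so that only the $(1,1)$-component is left to be eliminated by (iv) while (v) together with the orthogonality relations disposes of everything else, is where the genuine work lies; the remainder is the routine assembly through (ii) and (iii) described above.
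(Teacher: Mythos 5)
Your proposal follows the same overall skeleton as the paper (localize through $e_\alpha$, Peirce-decompose, kill the diagonal piece with (iv), finish with (iii), use (ii) for faithfulness), but it has a genuine gap at its central step, which you assert rather than prove: the claim that hypothesis (v) together with the orthogonality relations ``clears all the off-diagonal and the $(2,2)$-type Peirce contributions.'' That mechanism works only when the localization annihilates at least one argument; it does prove the mixed cases (the paper's Claim 3.2, e.g. $f(x_{12},\gamma,e_1)=0$), and it gives $f(x_{12},\gamma,u_{12})\gamma_1 e_1\beta a=0$ because both arguments die against $\gamma_1 e_1$. But it says nothing about the complementary direction $f(x_{12},\gamma,u_{12})\gamma_1 e_2\beta a$, where neither argument dies, and since $1_1=e_1+e_2$ you need both directions to vanish before (ii) can be invoked. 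This case --- $f$ on $\mathfrak{M}_{12}\times\mathfrak{M}_{12}$, the paper's Claim 3.3 --- is exactly where the real work lies, and it needs a trick absent from your proposal: since $x_{12}\gamma_1 u_{12}=0$ and $e_1\gamma_1 e_2=0$, one rewrites $f(x_{12}\gamma_1 e_2\beta a,\gamma,u_{12}\gamma_1 e_2\beta a)=f(x_{12}\gamma_1 w,\gamma,e_1\gamma_1 w)$ with $w=e_2\beta a+u_{12}\gamma_1 e_2\beta a$, which equals $f(x_{12},\gamma,e_1)\gamma_1 w=0$ by the mixed case. In your setup the $(1,2)$-component of $f(A,\gamma,B)$ is controlled precisely by $f(A_{12},\gamma,B_{12})$, and moreover the verification of the hypothesis of (iv) for the $(1,1)$-component --- the step you yourself flag as ``where the genuine work lies'' and defer --- also requires this same $\mathfrak{M}_{12}\times\mathfrak{M}_{12}$ vanishing. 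So what you treat as routine Peirce bookkeeping is the actual mathematical content of the theorem, and the mechanism you offer for it fails.

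There is a second, related gap: your localization identities are too weak. By collapsing $e_\alpha\gamma_\alpha e_\alpha=e_\alpha$ inside (vi) and (vii) you obtain only multipliers that are $k$-fold products of honest elements of $\mathfrak{M}$. The paper instead first proves (Claim 3.1) the full one-factor identities $f(x,\gamma,y)\beta u=f(x\beta u,\gamma,y\beta u)$ and $u\beta f(x,\gamma,y)=f(u\beta x,\gamma,u\beta y)$ for \emph{arbitrary} $u\in\mathfrak{M}$ and $\beta\in\Gamma$, by observing that the difference of the two sides annihilates every $k$-fold product $\Gamma\mathfrak{M}\Gamma\mathfrak{M}\cdots\Gamma\mathfrak{M}$ and applying (ii) iteratively. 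This full strength is indispensable: every subsequent step multiplies by elements of the form $e_2\beta a=(1_1-e_1)\beta a$, which are genuine elements of $\mathfrak{M}$ (via the map $f'_\alpha$, resp. $f_\alpha$) but are \emph{not} products of elements, hence unreachable by your collapsing device. Your proposed fallback --- expanding $t\gamma''(1_\alpha-e_\alpha)=t\gamma''1_\alpha-t\gamma''e_\alpha$ and ``transporting'' the identities through the formal complement --- cannot be executed, because $f$ is not known to be additive: one cannot subtract two localization identities for $f$ to manufacture a third. Both gaps are repaired by establishing the paper's Claim 3.1 first, then the chain of vanishing statements (mixed pairs, then $\mathfrak{M}_{12}\times\mathfrak{M}_{12}$, then $\mathfrak{M}_{11}\times\mathfrak{M}_{11}$) in that order, which is precisely the paper's route.
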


Henceforth we always assume that $\M$ satisfies the conditions $(i)-(iv)$ of Theorem \ref{passbru}. The proof will be organized in a series of claims. We begin with the following claim.

\begin{claim}\label{claim1}
$u\beta f(x, \gamma, y) = f(u \beta x, \gamma, u \beta y)$ and $f(x, \gamma, y)\beta u = f(x\beta u, \gamma, y\beta u)$ for all $x, y, u \in \M$ and $\beta, \gamma \in \Gamma$.
\end{claim}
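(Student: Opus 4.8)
Since the case $k=1$ is precisely hypotheses $(vi)$ and $(vii)$, I assume $k\ge 2$. The plan is to collapse the $k$-fold products down to a single multiplication by exploiting $e_\alpha\gamma_\alpha e_\alpha=e_\alpha$. Fixing $\alpha\in\Lambda$ and substituting $u_1=\cdots=u_k=e_\alpha$, $\gamma_1=\cdots=\gamma_{k-1}=\gamma_\alpha$ in $(vi)$, the factor $u_1\gamma_1u_2\cdots\gamma_{k-1}u_k$ telescopes by associativity to $e_\alpha$, so with $\gamma_k=\delta$ left free I obtain
\[ e_\alpha\delta f(x,\gamma,y)=f(e_\alpha\delta x,\gamma,e_\alpha\delta y) \]
for all $\delta\in\Gamma$, and symmetrically $f(x,\gamma,y)\delta e_\alpha=f(x\delta e_\alpha,\gamma,y\delta e_\alpha)$ from $(vii)$. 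Keeping instead $u_1=u_0$ free (respectively $u_k=u_0$ free) while setting the remaining factors to $e_\alpha$ telescopes to $u_0\gamma_\alpha e_\alpha$ (respectively $e_\alpha\gamma_\alpha u_0$); hence the first identity of the claim already holds whenever the left multiplier has the form $(u_0\gamma_\alpha e_\alpha)\delta$ or $(e_\alpha\gamma_\alpha u_0)\delta$.

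To reach an arbitrary multiplier $u\beta$, I would invoke the local unit $1_\alpha=e_\alpha+f_\alpha$. Because $a\beta f_\alpha=f'_\alpha(a,\beta)$ lies in $\M$, the element $u\beta 1_\alpha=u\beta e_\alpha+u\beta f_\alpha$ is a genuine element of $\M$, and the associativity hypothesis $(a\beta 1_\alpha)\gamma b=a\beta(1_\alpha\gamma b)$ together with $1_\alpha\gamma_\alpha z=z$ yields the splitting
\[ u\beta z=(u\beta e_\alpha)\gamma_\alpha z+(u\beta f_\alpha)\gamma_\alpha z\qquad(z\in\M). \]
The first summand is controlled by the previous paragraph, since $u\beta e_\alpha=(u\beta e_\alpha)\gamma_\alpha e_\alpha$ lies in $\M\gamma_\alpha e_\alpha$, giving $(u\beta e_\alpha)\gamma_\alpha f(x,\gamma,y)=f((u\beta e_\alpha)\gamma_\alpha x,\gamma,(u\beta e_\alpha)\gamma_\alpha y)$. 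Thus the whole obstruction to the claim is concentrated in the second summand, the part of the multiplier living in the $f_\alpha$-direction (the corner $\mathfrak{M}_{22}$), which no telescoping of $e_\alpha$'s can reproduce since $f_\alpha\gamma_\alpha e_\alpha=0$; moreover this summand cannot be recombined with the first without presupposing additivity of $f$, which is not yet available.

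The hard part is therefore exactly this $\mathfrak{M}_{22}$-corner, where I would abandon realizing the multiplier as a product and argue by faithfulness instead. Setting $t:=u\beta f(x,\gamma,y)-f(u\beta x,\gamma,u\beta y)$, the idempotent relation above lets me evaluate $e_\alpha\delta\,t$ and, through the splitting, reduce it to terms that the already-settled cases force to vanish. Condition $(iii)$ — whereby $e_\alpha\Gamma\M\Gamma t=0$ for all $\alpha\in\Lambda$ implies $t=0$ — together with condition $(iv)$, which is tailored precisely to control $e_\alpha\gamma_\alpha x\gamma_\alpha e_\alpha$ against the complementary part $1_\alpha-e_\alpha$, and the fact that the family $\{e_\alpha\mid\alpha\in\Lambda\}$ detects all of $\M$, should then force $t=0$ for every $u,\beta,x,y,\gamma$. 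The second identity $f(x,\gamma,y)\beta u=f(x\beta u,\gamma,y\beta u)$ follows by the mirror-image argument, telescoping on the right through $(vii)$, using $f'_\alpha$ in place of $f_\alpha$ and condition $(ii)$. The main obstacle throughout is organizing this corner reduction so that conditions $(ii)$--$(iv)$ apply cleanly without ever assuming $f$ additive in its outer arguments.
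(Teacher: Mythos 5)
Your three telescoped identities are correct, but neither half of the claim is actually closed, and the two halves fail for different reasons. For the \emph{left} identity your established cases plus condition $(iii)$ really do suffice, but not by the route you sketch: for $t=u\beta f(x,\gamma,y)-f(u\beta x,\gamma,u\beta y)$ one simply computes $e_\alpha\delta r\mu t$ for arbitrary $\delta,\mu\in\Gamma$, $r\in\M$. Since $e_\alpha\delta r\mu u=e_\alpha\gamma_\alpha(e_\alpha\delta r\mu u)$ and $e_\alpha\delta r=e_\alpha\gamma_\alpha(e_\alpha\delta r)$, both products $e_\alpha\delta r\mu(u\beta f(x,\gamma,y))$ and $e_\alpha\delta r\mu f(u\beta x,\gamma,u\beta y)$ are left multiplications of the form $(e_\alpha\gamma_\alpha w)\nu(\cdot)$, which your third telescoped case evaluates, and both come out equal to $f(e_\alpha\delta r\mu u\beta x,\gamma,e_\alpha\delta r\mu u\beta y)$; distributivity then gives $e_\alpha\Gamma\M\Gamma t=0$ and $(iii)$ kills $t$. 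No splitting $u\beta z=(u\beta e_\alpha)\gamma_\alpha z+(u\beta f_\alpha)\gamma_\alpha z$, no $\mathfrak{M}_{22}$-corner, and no condition $(iv)$ enter at all; your write-up instead centers on that splitting, correctly notes it cannot be recombined without additivity of $f$, and then asserts that $(iii)$ and $(iv)$ ``should'' force $t=0$ --- that is an admission of a gap, not an argument.

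The \emph{right} identity is where the proposal genuinely breaks. Your ``mirror-image argument \ldots using condition $(ii)$'' cannot work, because the hypotheses are not left--right symmetric: condition $(ii)$ requires $s\Gamma\M=0$ for $s=f(x,\gamma,y)\beta u-f(x\beta u,\gamma,y\beta u)$, i.e.\ $s\mu r=0$ for \emph{arbitrary} $r\in\M$, whereas right-telescoping through $(vii)$ only controls products that pass through $e_\alpha$, yielding $s\mu(e_\alpha\gamma_\alpha r)=0$ and $s\mu r\lambda e_\alpha=0$ but never $s\mu r$ itself. The uncontrolled piece is exactly $s\mu(f_\alpha\gamma_\alpha r)=s\mu r-s\mu(e_\alpha\gamma_\alpha r)$, and there is no right-sided analogue of $(iii)$ (nothing of the form ``$x\Gamma\M\Gamma e_\alpha=0$ for all $\alpha$ implies $x=0$'') to strip a trailing $e_\alpha$. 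The paper avoids all idempotent machinery with one regrouping trick your proposal misses: in the $(k+1)$-factor product $f(x,\gamma,y)\beta u\gamma_1u_1\gamma_2u_2\cdots\gamma_ku_k$, absorb $u\gamma_1u_1$ into a single factor so that $(vii)$ applies with exactly $k$ factors, giving
\[
f(x,\gamma,y)\beta u\gamma_1u_1\cdots\gamma_ku_k
= f(x\beta u\gamma_1u_1\cdots\gamma_ku_k,\gamma,\,y\beta u\gamma_1u_1\cdots\gamma_ku_k)
= f(x\beta u,\gamma,y\beta u)\gamma_1u_1\cdots\gamma_ku_k ,
\]
whence $\bigl(f(x,\gamma,y)\beta u-f(x\beta u,\gamma,y\beta u)\bigr)\gamma_1u_1\cdots\gamma_ku_k=0$ for all choices, and $k$ successive applications of $(ii)$ finish; the left identity is the mirror of \emph{this} argument, using $(vi)$ and the consequence $\M\Gamma x=0\Rightarrow x=0$ of $(iii)$. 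You should adopt that regrouping step; without it the second identity remains unproved.
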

\begin{proof}
For any $x, y, u, u_1, u_2, \ldots  , u_k \in \M$ and $\beta, \gamma, \gamma_1, \gamma_2, \ldots  , \gamma_k \in \Gamma$, we have 
$$
\begin{aligned}
f(x, \gamma, y)\beta u\gamma_1u_1\gamma_2u_2 \cdots  \gamma_k u_k &=f(x, \gamma, y)\beta(u\gamma_1 u_1)\gamma_2 u_2 \cdots  \gamma_k u_k
\\&= f(x\beta u\gamma_1 u_1 \gamma_2 u_2 \cdots   \gamma_{k} u_k, \gamma, y\beta u\gamma_1 u_1 \gamma_2 u_2 \cdots  \gamma_k u_k)
\\&= f(x\beta u, \gamma, y\beta u)\gamma_1 u_1 \gamma_2 u_2 \cdots  \gamma_k u_k,
\end{aligned}
$$
by condition $(vii)$. Hence, $(f(x, \gamma, y)\beta u - f(x\beta u, \gamma, y\beta u)) \gamma_1 u_1 \gamma_2 u_2 ... \gamma_k u_k = 0.$
This implies
$$(\cdots((f(x, \gamma, y)\beta u - f(x\beta u, \gamma, y\beta u)) \Gamma \M) \Gamma \M \cdots )\Gamma \M = 0.$$
From the condition $(ii)$ of the Theorem \ref{passbru}, we have $f(x, \gamma, y)\beta u = f(x\beta u, \gamma, y\beta u).$
Similarly, we prove $u \beta f(x, \gamma, y) = f(u \beta x, \gamma, u \beta y).$
\end{proof}

The conditions of the Theorem \ref{passbru}, ensure that $\Lambda \neq \emptyset$. Thus, let $e_1 \in \left\{e_\alpha ~|~ \alpha \in \Lambda \right\}$ be a nontrivial $\gamma_1$-idempotent.

\begin{claim}\label{claim2}
$f(x_{ii}, \gamma, x_{jk}) = 0 = f(x_{jk}, \gamma, x_{ii})$ for $j \neq k$.
\end{claim}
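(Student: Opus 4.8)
The plan is to fix $F=f(x_{ii},\gamma,x_{jk})$ and show that $F$ is annihilated by $\mathfrak{M}$ on one side, after which conditions $(ii)$ and $(iii)$ of Theorem \ref{passbru} force $F=0$; the statement $f(x_{jk},\gamma,x_{ii})=0$ is entirely analogous, with the two slots interchanged. The combinatorial engine is the dichotomy provided by $j\neq k$: since $i,j,k\in\{1,2\}$ and $\{j,k\}=\{1,2\}$, the index $i$ equals exactly one of $j,k$, so exactly one of ``$i\neq j$'' and ``$i\neq k$'' holds. If $i\neq k$, I would multiply $F$ on the right by an arbitrary Peirce component $y_{st}$ through $\gamma_1$; Claim \ref{claim1} gives $F\gamma_1 y_{st}=f(x_{ii}\gamma_1 y_{st},\gamma,x_{jk}\gamma_1 y_{st})$, and by the relation $\mathfrak{M}_{ab}\gamma_1\mathfrak{M}_{cd}=0$ for $b\neq c$ the first argument vanishes unless $s=i$ and the second unless $s=k$. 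As $i\neq k$ these cannot both occur, so one argument is always $0$ and condition $(v)$ yields $F\gamma_1 y_{st}=0$; summing over the Peirce decomposition gives $F\gamma_1\mathfrak{M}=0$. If instead $i\neq j$, the mirror computation with left multiplication gives $\mathfrak{M}\gamma_1 F=0$.

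The delicate step is to upgrade these one-sided $\gamma_1$-annihilations to genuine $\Gamma$-annihilations, which is the form demanded by Theorem \ref{passbru}$(ii)$--$(iii)$; the difficulty is that the vanishing $\mathfrak{M}_{ab}\gamma_1\mathfrak{M}_{cd}=0$ (for $b\neq c$) is special to $\gamma_1$ and has no counterpart for a general $\beta\in\Gamma$. To bridge the gap, I would observe that $F\gamma_1\mathfrak{M}=0$ already forces $F\gamma_1 e_1=0$, whence $F=F\gamma_1 1_1=F\gamma_1 e_2$ since $1_1=e_1+e_2$ is the $\gamma_1$-identity. Then for arbitrary $\beta\in\Gamma$ and $y\in\mathfrak{M}$ the assumed associativity $(a\alpha e_2)\beta b=a\alpha(e_2\beta b)$ rewrites $F\beta y=(F\gamma_1 e_2)\beta y=F\gamma_1(e_2\beta y)$, which lies in $F\gamma_1\mathfrak{M}=0$ because $e_2\beta y\in\mathfrak{M}$. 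Hence $F\Gamma\mathfrak{M}=0$ and condition $(ii)$ gives $F=0$. In the left case the symmetric device, using $F=e_2\gamma_1 F$ and $y\beta F=(y\beta e_2)\gamma_1 F$, yields $\mathfrak{M}\Gamma F=0$, and condition $(iii)$ gives $F=0$.

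I expect this upgrade to be the one genuine obstacle. The point to watch is that $e_2$ is not an honest element of $\mathfrak{M}$ but the formal complement of $e_1$ inside $1_1$, so the only legitimate tools for handling it are the prescribed associativity relation and the identity behaviour of $1_1$ under $\gamma_1$; in particular one cannot simply apply Claim \ref{claim1} with $u=e_2$, as that claim is available only for genuine elements of $\mathfrak{M}$. Once the $j\neq k$ dichotomy and this upgrade are in place, the remaining ingredients---the index bookkeeping, condition $(v)$, and Claim \ref{claim1}---are routine.
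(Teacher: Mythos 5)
Your proof is correct and takes essentially the same route as the paper's: the paper also splits on $i=j$ versus $i=k$, right- (resp.\ left-) multiplies through $\gamma_1$ by Peirce components to kill one argument of $f$ via condition $(v)$, and then passes to full $\Gamma$-annihilation using $1_1=e_1+e_2$ and the assumed $e_2$-associativity before invoking conditions $(ii)$ and $(iii)$. The only difference is organizational: the "delicate upgrade" you perform in a second step ($F=F\gamma_1 e_2$, then $F\beta y=F\gamma_1(e_2\beta y)$) is done by the paper in one stroke, by multiplying directly by the genuine elements $e_l\beta a_{rs}$ ($l=1,2$) and summing to get $f(x_{ii},\gamma,x_{jk})\gamma_1 1_1\beta a=0$.
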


\begin{proof}
First assume that $i = j$. For arbitrary $a_{rs} \in \M_{rs} (r, s = 1, 2)$ we have
$f(x_{ii}, \gamma, x_{jk})\gamma_1e_l\beta a_{rs} = f(x_{ii}\gamma_1e_l\beta a_{rs}, \gamma, x_{jk}\gamma_1e_l\beta a_{rs}) = 0 \ (l = 1, 2)$ which
yields in $f(x_{ii}, \gamma, x_{jk})\gamma_1e_l\beta a = 0 \ (l = 1, 2)$. This implies $f(x_{ii}, \gamma, x_{jk})\gamma_1 1_1 \Gamma \M = 0$ which results in
$$f(x_{ii}, \gamma, x_{jk})\Gamma \M = 0.$$
By condition $(ii)$ of the Theorem \ref{passbru}, we see that $f(x_{ii}, \gamma, x_{jk}) = 0$. Now, if $i \neq j$ then we have $i = k$. Hence, for arbitrary $a_{rs} \in \M_{rs} \ (r, s = 1, 2)$ we have $a_{rs}\beta e_l \gamma_1 f(x_{ii}, \gamma, x_{jk}) = f(a_{rs}\beta e_l \gamma_1 x_{ii}, \gamma, a_{rs}\beta e_l\gamma_1 x_{jk}) = 0 \ (l = 1, 2)$ which
yields in $a\beta e_l \gamma_1 f(x_{ii}, \gamma, x_{jk}) = 0 \ (l = 1, 2)$. This implies $\M \Gamma 1_1 \gamma_1 f(x_{ii}, \gamma, x_{jk}) =
0$ which results in $\M \Gamma f(x_{ii}, \gamma, x_{jk}) = 0$. By condition $(iii)$ of the Theorem \ref{passbru},
we see that $f(x_{ii}, \gamma, x_{jk}) = 0$. Similarly, we prove $f(x_{jk}, \gamma, x_{ii}) = 0.$
\end{proof}

\begin{claim}\label{claim3}
$f(x_{12}, \gamma, u_{12}) = 0.$
\end{claim}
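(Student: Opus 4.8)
The plan is to set $t := f(x_{12},\gamma,u_{12})$, to locate $t$ inside the Peirce block $\mathfrak{M}_{12}$, and only then to force it to vanish. For the first part I would feed idempotents into Claim~\ref{claim1}. Since $x_{12},u_{12}\in\mathfrak{M}_{12}=e_1\gamma_1\M\gamma_1 e_2$, the orthogonality relations $e_2\gamma_1 e_1=0$ and $e_2\gamma_1 e_2=e_2$ give $e_2\gamma_1 x_{12}=0$, $x_{12}\gamma_1 e_1=0$, $e_1\gamma_1 x_{12}=x_{12}$, $x_{12}\gamma_1 e_2=x_{12}$, and likewise for $u_{12}$. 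Hence, by Claim~\ref{claim1} together with hypothesis $(v)$, $e_2\gamma_1 t=f(e_2\gamma_1 x_{12},\gamma,e_2\gamma_1 u_{12})=f(0,\gamma,0)=0$ and $t\gamma_1 e_1=f(x_{12}\gamma_1 e_1,\gamma,u_{12}\gamma_1 e_1)=0$, while the same maps yield $e_1\gamma_1 t=t$ and $t\gamma_1 e_2=t$. Together these say $t=e_1\gamma_1 t\gamma_1 e_2\in\mathfrak{M}_{12}$.

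For the second part I would try to annihilate $t$ from one side and then appeal to the faithfulness conditions of Theorem~\ref{passbru}. The natural move is to multiply $t$ on the right by an arbitrary $s_{21}\in\mathfrak{M}_{21}$, landing in $\mathfrak{M}_{11}$: by Claim~\ref{claim1}, $t\gamma_1 s_{21}=f(x_{12}\gamma_1 s_{21},\gamma,u_{12}\gamma_1 s_{21})$, and one would aim to establish $(t\gamma_1 s_{21})\Gamma\M\Gamma e_2=0$ so that the prime-like condition $(iv)$ of Theorem~\ref{passbru} gives $t\gamma_1 s_{21}=0$ for every $s_{21}$. Combined with $e_1\gamma_1 t=t$ one would then recover $t=0$ from condition $(iii)$. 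An equivalent route is to show $\M\Gamma t=0$ directly and invoke condition $(iii)$, or $t\,\Gamma\M=0$ and invoke condition $(ii)$.

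The hard part will be exactly this last step, and it is where condition $(iv)$ is indispensable. Because Claim~\ref{claim1} forces every multiplier to act on both arguments of $f$ at once, the two arguments never leave a common Peirce block: any iterated product $a\gamma_1 t\gamma_1 b$ is again of the form $f(\mathfrak{M}_{ij},\gamma,\mathfrak{M}_{ij})$ for a single pair $(i,j)$, so the selective cancellation used in Claim~\ref{claim2} -- where the two arguments sat in \emph{different} blocks and one of them was sent to $0$ via $(v)$ -- is simply unavailable here, and the product manipulations tend to run in circles. Breaking this symmetry is precisely the purpose of the prime-like condition $(iv)$, and the delicate point is to produce an element of $\mathfrak{M}_{11}$ to which $(iv)$ applies with a genuinely verifiable two-sided annihilation hypothesis. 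I expect the synthesis of the one-sided localizations $e_1\gamma_1 t=t$ and $t\gamma_1 e_2=t$ with condition $(iv)$ and Claim~\ref{claim2} to be the crux of the argument.
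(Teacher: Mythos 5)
Your first paragraph is fine: Claim~\ref{claim1} together with hypothesis $(v)$ and the Peirce relations does give $e_1\gamma_1 t=t$, $t\gamma_1 e_2=t$, $e_2\gamma_1 t=0=t\gamma_1 e_1$, hence $t\in \mathfrak{M}_{12}$. But everything after that is a plan, not a proof. You propose to establish $(t\gamma_1 s_{21})\Gamma\M\Gamma e_2=0$ and invoke condition $(iv)$ of Theorem~\ref{passbru}, yet you never verify that annihilation hypothesis; you explicitly defer it as ``the hard part'' and ``the crux.'' Since no annihilation statement ($t\Gamma\M=0$, $\M\Gamma t=0$, or the hypothesis of $(iv)$) is actually proved, none of $(ii)$, $(iii)$, $(iv)$ can be applied, and the claim remains open. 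Worse, the route is circular, as you half-suspect: expanding $(t\gamma_1 s_{21})\beta m\delta e_2$ by Claim~\ref{claim1} just produces another value of $f$ at a pair of elements lying in a common Peirce column, i.e.\ a term of exactly the kind you are trying to kill.

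The missing idea --- and the reason your diagnosis is wrong --- is that the Claim~\ref{claim2}-type cancellation \emph{is} available here. You argue that because Claim~\ref{claim1} multiplies both arguments simultaneously, the two arguments ``never leave a common Peirce block.'' But $f$ is a function of the \emph{values} of its arguments only, so one may replace the common multiplier by any other element that produces the same two products. The paper does exactly this: for $v=e_2\beta a_{rs}$ put $w=v+u_{12}\gamma_1 v$. Since $x_{12}\gamma_1 u_{12}\in \mathfrak{M}_{12}\gamma_1\mathfrak{M}_{12}=0$, $e_1\gamma_1 e_2=0$ and $e_1\gamma_1 u_{12}=u_{12}$, one has $x_{12}\gamma_1 w=x_{12}\gamma_1 v$ and $e_1\gamma_1 w=u_{12}\gamma_1 v$, whence
$$t\gamma_1 v=f(x_{12}\gamma_1 v,\gamma,u_{12}\gamma_1 v)=f(x_{12}\gamma_1 w,\gamma,e_1\gamma_1 w)=f(x_{12},\gamma,e_1)\gamma_1 w=0$$
by Claim~\ref{claim2}, the arguments $x_{12}$ and $e_1$ now sitting in \emph{different} blocks. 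Combined with $t\gamma_1 e_1\beta a_{rs}=f(x_{12}\gamma_1 e_1\beta a_{rs},\gamma,u_{12}\gamma_1 e_1\beta a_{rs})=f(0,\gamma,0)=0$, summing over $1_1=e_1+e_2$ gives $t\beta a=0$ for all $a\in\M$ and $\beta\in\Gamma$, so $t\Gamma\M=0$ and condition $(ii)$ alone finishes the proof. In particular, condition $(iv)$, which you call indispensable here, is not used in this claim at all; it enters only in the next claim, for $f(x_{11},\gamma,u_{11})$.
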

\begin{proof}
For an arbitrary $a_{rs} \in \M_{rs} \ (r, s = 1, 2)$, we have $f(x_{12}, \gamma, u_{12})\gamma_1e_1\beta a_{rs} = f(x_{12}\gamma_1e_1\beta a_{rs}, \gamma, u_{12}\gamma_1e_1\beta a_{rs}) = f(0, \gamma, 0) = 0$, by Claim \ref{claim1}. It follows that
\begin{eqnarray}
f(x_{12}, \gamma, u_{12})\gamma_1e_1\beta a = 0,
\end{eqnarray}
where $a = a_{11} + a_{12} + a_{21} + a_{22}$. Now, from Claim \ref{claim2}, we have 
$$ \begin{aligned}f(x_{12}, \gamma, u_{12})&\gamma_1e_2\beta a_{1l} = f(x_{12}\gamma_1e_2\beta a_{1l}, \gamma, u_{12}\gamma_1e_2\beta a_{1l}) \\&= f(x_{12}\gamma_1(e_2\beta a_{1l} + u_{12}\gamma_1e_2\beta a_{1l}), \gamma, e_1\gamma_1(e_2\beta a_{1l} + u_{12}\gamma_1e_2\beta a_{1l}))\\&= f(x_{12}, \gamma, e_1)(\gamma_1(e_2\beta a_{1l} + u_{12}\gamma_1e_2\beta a_{1l}))= 0 \ (l = 1, 2)\end{aligned}$$ 
and 
$$ \begin{aligned}f(x_{12}, \gamma, u_{12})&\gamma_1e_2\beta a_{2l} = f(x_{12}\gamma_1e_2\beta a_{2l}, \gamma, u_{12}\gamma_1e_2\beta a_{2l}) \\&=f(x_{12}\gamma_1(e_2\beta a_{2l} + u_{12}\gamma_1e_2\beta a_{2l}), \gamma, e_1\gamma_1(e_2\beta a_{2l} + u_{12}\gamma_1e_2\beta a_{2l})) \\&= f(x_{12}, \gamma, e_1)(\gamma_1(e_2\beta a_{2l} + u_{12}\gamma_1e_2\beta a_{2l}))= 0 \ (l = 1, 2).
\end{aligned}
$$ It follows that
\begin{eqnarray}
f(x_{12}, \gamma, u_{12})\gamma_1e_2\beta a = 0,
\end{eqnarray}
where $a = a_{11} + a_{12} + a_{21} + a_{22}$. From $(1)$ and $(2)$, we obtain \linebreak $f(x_{12}, \gamma, u_{12})\gamma_1 1_1\beta a =0$ which results in
$$f(x_{12}, \gamma, u_{12})\Gamma \M = 0.$$
This implies $f(x_{12}, \gamma, u_{12}) = 0$, by condition $(ii)$ of the Theorem \ref{passbru}.
\end{proof}

\begin{claim}\label{claim4}
$f(x_{11}, \gamma, u_{11}) = 0.$
\end{claim}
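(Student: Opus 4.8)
The plan is to reduce the statement to a single application of condition $(iv)$ of Theorem \ref{passbru}, which is exactly the hypothesis tailored to the diagonal corner $\M_{11}=e_1\gamma_1\M\gamma_1 e_1$. First I would pin down the Peirce position of $f(x_{11},\gamma,u_{11})$. Since $x_{11},u_{11}\in\M_{11}$ satisfy $e_1\gamma_1 x_{11}=x_{11}=x_{11}\gamma_1 e_1$ (and likewise for $u_{11}$), Claim \ref{claim1} gives $e_1\gamma_1 f(x_{11},\gamma,u_{11})=f(e_1\gamma_1 x_{11},\gamma,e_1\gamma_1 u_{11})=f(x_{11},\gamma,u_{11})$ and, on the right, $f(x_{11},\gamma,u_{11})\gamma_1 e_1=f(x_{11},\gamma,u_{11})$. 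Hence $f(x_{11},\gamma,u_{11})=e_1\gamma_1 f(x_{11},\gamma,u_{11})\gamma_1 e_1\in\M_{11}$, so it has precisely the shape $e_1\gamma_1 z\gamma_1 e_1$ with $z=f(x_{11},\gamma,u_{11})$ to which $(iv)$ applies.

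Next I would show that $f(x_{11},\gamma,u_{11})$ annihilates on the right every element of the form $m\beta_2 e_2$. The key observation is that right multiplication by any $a_{r2}\in\M_{r2}$ pushes both arguments into $\M_{12}$: by Claim \ref{claim1}, $f(x_{11},\gamma,u_{11})\beta a_{r2}=f(x_{11}\beta a_{r2},\gamma,u_{11}\beta a_{r2})$, and the Peirce relation $\M_{11}\Gamma\M_{r2}\subseteq\M_{12}$ places both $x_{11}\beta a_{r2}$ and $u_{11}\beta a_{r2}$ in $\M_{12}$; therefore the right-hand side vanishes by Claim \ref{claim3}. This yields $f(x_{11},\gamma,u_{11})\beta a_{r2}=0$ for all $\beta\in\Gamma$, $r\in\{1,2\}$ and $a_{r2}\in\M_{r2}$. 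Now, for arbitrary $m\in\M$ and $\beta_1,\beta_2\in\Gamma$, writing $m=\sum_{k,l}m_{kl}$ and using $\M_{kl}\Gamma\M_{22}\subseteq\M_{k2}$ shows that $m\beta_2 e_2$ lies in $\M_{12}\oplus\M_{22}$; decomposing $f(x_{11},\gamma,u_{11})\beta_1(m\beta_2 e_2)$ accordingly and applying the previous vanishing to each summand gives $f(x_{11},\gamma,u_{11})\beta_1 m\beta_2 e_2=0$. As $1_1-e_1=e_2$, this is exactly $f(x_{11},\gamma,u_{11})\Gamma\M\Gamma(1_1-e_1)=0$.

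Finally I would invoke condition $(iv)$ of Theorem \ref{passbru} with $e_\alpha=e_1$ and $\gamma_\alpha=\gamma_1$: having written $f(x_{11},\gamma,u_{11})=e_1\gamma_1 f(x_{11},\gamma,u_{11})\gamma_1 e_1$ and shown $\bigl(e_1\gamma_1 f(x_{11},\gamma,u_{11})\gamma_1 e_1\bigr)\Gamma\M\Gamma(1_1-e_1)=0$, condition $(iv)$ forces $e_1\gamma_1 f(x_{11},\gamma,u_{11})\gamma_1 e_1=0$, that is, $f(x_{11},\gamma,u_{11})=0$.

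I expect the main obstacle to be precisely the diagonal nature of this case. The direct strategy of Claims \ref{claim2} and \ref{claim3}, namely multiplying by $\gamma_1 e_l\beta a_{rs}$ and reading off the resulting Peirce block, breaks down here: multiplying an $\M_{11}$ element by an element with left index $1$ keeps everything in $\M_{11}$, so one only reproduces expressions $f(\M_{11},\gamma,\M_{11})$ and the argument becomes circular. The way around this is not to attempt $f(x_{11},\gamma,u_{11})\Gamma\M=0$ outright, but to kill only the off-diagonal slice $f(x_{11},\gamma,u_{11})\Gamma\M\Gamma e_2$, which Claim \ref{claim3} does control, and then let condition $(iv)$ upgrade that partial annihilation into the full conclusion. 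This is exactly the role for which hypothesis $(iv)$ was built into Theorem \ref{passbru}.
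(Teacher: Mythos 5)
Your proposal is correct and takes essentially the same route as the paper's own proof: both identify $f(x_{11},\gamma,u_{11})$ with its $(1,1)$-corner $e_1\gamma_1 f(x_{11},\gamma,u_{11})\gamma_1 e_1$ via Claim \ref{claim1}, show that it annihilates $\Gamma\M\Gamma(1_1-e_1)$ by pushing the right multiplications inside $f$ so that Claim \ref{claim3} applies, and then conclude with condition $(iv)$ of Theorem \ref{passbru}. The only difference is organizational: the paper multiplies by $e_l\alpha a\beta e_2$ and sums over $l=1,2$, whereas you first prove $f(x_{11},\gamma,u_{11})\beta a_{r2}=0$ for Peirce components $a_{r2}$ and then decompose $m\beta_2 e_2$ into its $\M_{12}$ and $\M_{22}$ parts.
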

\begin{proof}
First of all, note that $e_1f(x_{11}, \gamma, u_{11})e_1 = \gamma_1f(x_{11}, \gamma, u_{11})\gamma_1$. On the other hand, for arbitrary element $a \in \M$ we have $f(x_{11}, \gamma, u_{11})\gamma_1e_l\alpha a \beta e_2 = f(x_{11}\gamma_1 e_l\alpha a \beta e_2, \gamma, u_{11} \gamma_1 e_l \alpha a \beta e_2) = 0 \ (l = 1, 2)$, by Claim \ref{claim3}. It follows that
$f(x_{11}, \gamma, u_{11})\gamma_1 1_1 \alpha a \beta e_2 = 0$ which implies
$$(e_1 f(x_{11}, \gamma, u_{11})e_1)\Gamma \M \Gamma (1_1 - e_1) = 0.$$
Thus, $f(x_{11}, \gamma, u_{11}) = 0$, by condition $(iv)$ of the Theorem \ref{passbru}.
\end{proof}

\begin{claim}\label{claim4}
$f(e_1\Gamma \M, e_1 \Gamma \M) = 0.$
\end{claim}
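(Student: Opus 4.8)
First I would unwind the notation: by the multiplication relations of the Peirce decomposition one has $e_1\Gamma\M = \M_{11}\oplus\M_{12}$, so the claim asserts that $f(x,\gamma,y) = 0$ whenever $x,y$ lie in the first row $\M_{11}\oplus\M_{12}$ and $\gamma\in\Gamma$ is arbitrary. Writing $x = x_{11}+x_{12}$ and $y = y_{11}+y_{12}$, the essential difficulty is that additivity of $f$ in its ring arguments is not yet available, so one cannot simply expand $f(x_{11}+x_{12},\gamma,y_{11}+y_{12})$ into the four pure components that Claim \ref{claim3} and the preceding claim have already killed. The plan is instead to multiply $f(x,\gamma,y)$ on the right by the Peirce components of an arbitrary $z\in\M$, use the right-multiplication identity of Claim \ref{claim1} to absorb each product inside $f$, and exploit the multiplicative relations to force these inner products into a single Peirce component where the earlier claims apply; an annihilator argument then finishes via condition $(ii)$ of Theorem \ref{passbru}.

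Concretely, for an arbitrary $z\in\M$ with Peirce decomposition $z = \sum_{r,s} z_{rs}$ and each component $z_{rs}$, Claim \ref{claim1} gives
$$f(x,\gamma,y)\gamma_1 z_{rs} = f(x\gamma_1 z_{rs},\gamma,y\gamma_1 z_{rs}).$$
Because $x$ and $y$ lie in the first row, relation $(ii)$ of the Peirce decomposition ($\M_{ij}\gamma_1\M_{kl} = 0$ for $j\neq k$) annihilates one of the two summands of $x\gamma_1 z_{rs}$, and relation $(i)$ places the surviving term in $\M_{1s}$; thus $x\gamma_1 z_{rs},\,y\gamma_1 z_{rs}\in\M_{1s}$. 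Hence for $s = 1$ both arguments lie in $\M_{11}$ and the value is $0$ by the preceding claim, while for $s = 2$ both lie in $\M_{12}$ and the value is $0$ by Claim \ref{claim3}. In either case $f(x,\gamma,y)\gamma_1 z_{rs} = 0$ for every $r,s$.

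Summing over the four components and using biadditivity of the $\Gamma$-multiplication, I would conclude $f(x,\gamma,y)\gamma_1 z = 0$ for every $z\in\M$. Taking $z = 1_1\beta a$ for arbitrary $\beta\in\Gamma$ and $a\in\M$, and using $f(x,\gamma,y)\gamma_1 1_1 = f(x,\gamma,y)$ together with the associativity $(w\gamma_1 1_1)\beta a = w\gamma_1(1_1\beta a)$, this upgrades to $f(x,\gamma,y)\beta a = 0$ for all $\beta$ and $a$, i.e. $f(x,\gamma,y)\Gamma\M = 0$; condition $(ii)$ of Theorem \ref{passbru} then forces $f(x,\gamma,y) = 0$. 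I expect the main obstacle to be exactly this lack of additivity: the whole argument rests on selecting the right-hand multipliers so that Claim \ref{claim1} drives the mixed first-row arguments into the single Peirce components already handled, rather than trying to split $f$ additively.
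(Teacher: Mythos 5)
Your proof is correct and takes essentially the same route as the paper's: use Claim \ref{claim1} to absorb right multipliers into $f$, use the Peirce relations to force both arguments into a single component $\M_{11}$ or $\M_{12}$ where Claim \ref{claim3} and the preceding claim apply, and then finish with the $1_1$ trick and condition $(ii)$ of Theorem \ref{passbru}. The only cosmetic difference is that you multiply by the Peirce components $z_{rs}$ of an arbitrary element and afterwards upgrade $f(x,\gamma,y)\gamma_1\M = 0$ to $f(x,\gamma,y)\Gamma\M = 0$ via $z = 1_1\beta a$, whereas the paper multiplies directly by $e_l\beta a_{ij}$ $(l=1,2)$ to reach $f(x,\gamma,y)\gamma_1 1_1\beta a = 0$ in one step.
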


\begin{proof}
Let $x, y \in \M$ and $\lambda, \mu \in \Gamma$ be arbitrary elements and let us write
$x = x_{11} + x_{12} + x_{21} + x_{22}$ and $y = y_{11} + y_{12} + y_{21} + y_{22}$. It follows that $e_1\lambda x =
e_1\lambda x_{11}+e_1\lambda x_{12}+e_1 \lambda x_{21}+ e_1\lambda x_{22}$ and $e_1\mu y = e_1 \mu y_{11}+e_1 \mu y_{12}+e_1 \mu y_{21}+e_1 \mu y_{22}$.
Hence, for an arbitrary element $a_{ij} \in \M_{ij} (i, j = 1, 2)$ and $\beta \in \Gamma$, by properties
of Peirce decomposition of $\M$ and making use of the Claims \ref{claim1}, \ref{claim3} and
\ref{claim4}, we can see that 
$$
\begin{aligned}
f(e_1\lambda x, &\gamma, e_1 \mu y)\gamma_1e_1 \beta a_{ij} = f((e_1 \lambda x_{11} + e_1 \lambda x_{12} + e_1 \lambda x_{21} +
e_1 \lambda x_{22})\gamma_1e_1\beta a_{ij} , \\& \gamma,(e_1 \mu y_{11} +e_1 \mu y_{12} +e_1 \mu y_{21} +e_1 \mu y_{22}) \gamma_1e_1 \beta a_{ij})
\\&= f((e_1 \lambda x_{11} + e_1\lambda x_{21})\gamma_1 e_1 \beta a_{ij} , \gamma,(e_1 \mu y_{11}+e_1 \mu y_{21})\gamma_1e_1 \beta a_{ij})
\\&= 0
\end{aligned}
$$
and
$$
\begin{aligned}
 f(e_1 \lambda x, &\gamma, e_1\mu y)\gamma_1e_2 \beta a_{ij}
= f((e_1 \lambda x_{11}+e_1 \lambda x_{12}+ e_1 \lambda x_{21}+ e_1 \lambda x_{22})\gamma_1 e_2 \beta a_{ij}, \\& \gamma, (e_1 \mu y_{11}+e_1\mu y_{12}+e_1\mu y_{21}+
e_1 \mu y_{22})\gamma_1e_2\beta a_{ij}) \\&= f((e_1\lambda x_{12}+e_1\lambda x_{22})\gamma_1e_2\beta a_{ij} , \gamma, (e_1\mu y_{12}+e_1\mu y_{22})\gamma_1e_2\beta a_{ij})
\\&= 0. 
\end{aligned}
$$

It follows that $f(e_1\lambda x, \gamma , e_1\mu y)\gamma_1 1_1 \beta a = 0$, where $a = a_{11} +a_{12} +a_{21} +a_{22}.$
Thus $f(e_1\lambda x, \gamma, e_1 \mu y)\Gamma \M = 0$ which implies $f(e_1\lambda x, \gamma, e_1\mu y) = 0$, by condition $(ii)$ of the Theorem \ref{passbru}.
\end{proof}

\textbf{Proof of Theorem \ref{main}}
\begin{proof} 
Suppose $x, y \in \M$. For arbitrary elements $\alpha \in \Lambda$, $r \in \M$ and $\beta, \gamma \in \Gamma$
we have $e_\alpha \beta r\gamma f(x, \gamma, y) = f(e_\alpha \beta r \gamma x, \gamma, e_\alpha \beta r \gamma y) = 0$. Hence, for all $\alpha \in \Lambda$ we
have $e_\alpha \Gamma \M \Gamma f(x, \gamma, y) = 0$ which implies $f(x, \gamma, y) = 0$, by condition $(iii)$ of the
Theorem \ref{passbru}.
The Theorem is proved.
\end{proof}

\section{Applications of Theorem \ref{main}}

We will give some applications of our main result.

\begin{theorem}\label{teo1}
Let $\M$ be a $\Gamma$-ring satisfying the conditions $(i)-(iv)$ of Theorem
\ref{passbru}. Then any $n$-multiplicative isomorphism $\left(\varphi, \phi\right)$ of $\M$ onto an arbitrary
gamma ring is additive.
\end{theorem}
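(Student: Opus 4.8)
The plan is to reduce the additivity of $\phi$ to a single application of Theorem \ref{main}. Since $\phi$ is a bijection from $\M$ onto the target $\Delta$-ring $\G$, the additivity of $\phi$ is equivalent to the vanishing of the map
$$f \colon \M \times \Gamma \times \M \to \M, \qquad f(x, \gamma, y) = \phi^{-1}(\phi(x) + \phi(y)) - x - y,$$
which does not genuinely depend on the middle variable $\gamma$. Indeed, $f \equiv 0$ says precisely that $\phi(x) + \phi(y) = \phi(x+y)$ for all $x,y$. So it suffices to check that this particular $f$ satisfies hypotheses $(v)$--$(vii)$ of Theorem \ref{main}, with the free integer there specialized to $k = n-1$ (a positive integer since $n>1$). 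This choice is the crux of the setup: it makes every word $u_1\gamma_1 u_2 \cdots \gamma_{k-1}u_k\gamma_k z$ a product of exactly $n$ factors separated by $n-1$ elements of $\Gamma$, so that property $(iii)$ of an $n$-multiplicative isomorphism applies to it verbatim.

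First I would record the preliminary fact $\phi(0)=0$. Writing $w = \phi(0)$ and using that $0\gamma_1 x_2 \gamma_2 \cdots \gamma_{n-1}x_n = 0$ in $\M$, property $(iii)$ gives $w = w\,\varphi(\gamma_1)\phi(x_2)\cdots\varphi(\gamma_{n-1})\phi(x_n)$; since $\phi$ and $\varphi$ are surjective this reads $w = w\,\delta_1 g_2 \cdots \delta_{n-1} g_n$ for all $g_i \in \G$ and $\delta_i \in \Delta$. Replacing the last factor $g_n$ by a sum $g_n + g_n'$ and using the distributive law in the $\Delta$-ring $\G$ yields $w = w+w$, whence $w=0$. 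This gives condition $(v)$ at once, since $f(x,\gamma,0) = \phi^{-1}(\phi(x)+\phi(0)) - x = \phi^{-1}(\phi(x)) - x = 0$, and symmetrically $f(0,\gamma,x)=0$.

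The heart of the argument is conditions $(vi)$ and $(vii)$, and I would treat $(vi)$ in detail, the verification of $(vii)$ being its mirror image with right multiplication in place of left. Fix $u_1,\ldots,u_k$ and $\gamma_1,\ldots,\gamma_k$ and abbreviate, for $z\in\M$, the $n$-fold product $Pz := u_1\gamma_1 u_2 \cdots \gamma_{k-1}u_k\gamma_k z$. By $(iii)$ one has $\phi(Pz) = Q\,\phi(z)$, where $Q\,g := \phi(u_1)\varphi(\gamma_1)\cdots\phi(u_k)\varphi(\gamma_k)\,g$ is the corresponding left multiplication in $\G$. Since $z\mapsto Pz$ is additive on $(\M,+)$, expanding both sides of $(vi)$ lets the terms $-Px-Py$ cancel, so $(vi)$ is equivalent to the identity $P\,\phi^{-1}(\phi(x)+\phi(y)) = \phi^{-1}(\phi(Px)+\phi(Py))$. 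To see this, set $s=\phi^{-1}(\phi(x)+\phi(y))$; then $\phi(Ps) = Q\,\phi(s) = Q(\phi(x)+\phi(y)) = Q\,\phi(x)+Q\,\phi(y) = \phi(Px)+\phi(Py)$, the middle equality being distributivity in $\G$. Applying $\phi^{-1}$ gives exactly the required identity.

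With $(v)$--$(vii)$ in hand, Theorem \ref{main} forces $f=0$, that is $\phi(x+y)=\phi(x)+\phi(y)$ for all $x,y\in\M$, which is the additivity of $(\varphi,\phi)$. The only genuinely delicate point is the bookkeeping behind $(vi)$ and $(vii)$: one must confirm that $k=n-1$ renders every product that appears an honest product of $n$ elements so that $(iii)$ may be invoked, and one must push the left and right multiplication operators through $\phi^{-1}$ using the surjectivity of $\phi$ together with the distributive law in the arbitrary target ring $\G$. Notably, no property of $\varphi$ beyond its appearance inside $(iii)$ is required, since the factors $\varphi(\gamma_i)$ occur identically on both sides and cancel.
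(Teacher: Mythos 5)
Your proposal is correct and takes essentially the same route as the paper: both proofs reduce additivity to Theorem \ref{main} with $k = n-1$ by introducing a map $f$ measuring the additivity defect of the isomorphism, checking $(v)$--$(vii)$, and concluding $f=0$. The only cosmetic differences are that you set $f(x,\gamma,y)=\phi^{-1}(\phi(x)+\phi(y))-x-y$ where the paper uses $\varphi^{-1}(\varphi(x+y)-\varphi(x)-\varphi(y))$, and that you verify $(vi)$/$(vii)$ by intertwining the left/right multiplication operators with $\phi$ and using distributivity in the target ring, whereas the paper instead invokes the fact that the inverse pair is itself an $n$-multiplicative isomorphism --- both verifications are sound.
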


\begin{proof}
Since $\varphi$ is onto, $\varphi(x) = 0$ for some $x \in \M$. Hence $\varphi(0) = \varphi(0\gamma_1 \cdots  0\gamma_{n-1}x) =
\varphi(0)\phi(\gamma_1)\cdots\varphi(0)\phi(\gamma_{n-1})\varphi(x) = \varphi(0)\phi(\gamma_1)\cdots\varphi(0)\phi(\gamma_{n-1})0 = 0$ resulting in
$\varphi^{-1}(0) = 0$. Now, for any $x, y \in \M$ and $\gamma \in \Gamma$ we set
$$f(x, \gamma, y) = \varphi^{-1}(\varphi(x + y) - \varphi(x) - \varphi(y)).$$
Let us show that $f(x, \gamma, 0) = f(0, \gamma, x) = 0$ for all $x \in \M$. It is easy to
check that $(\varphi^{-1}, \phi^{-1})$ is also a $n$-multiplicative isomorphism. Thus, for any $u_1,\ldots, u_{n-1} \in \M$ and $\gamma_1, \ldots  , \gamma_{n-1} \in \Gamma$, we have
$$
\begin{aligned}
&f(x, \gamma, y)\gamma_1u_1 \cdots  \gamma_{n-1}u_{n-1}
\\& = \varphi^{-1}(\varphi(x + y) - \varphi(x) - \varphi(y))\phi^{-1}(\phi(\gamma_1))\varphi^{-1}(\varphi(u_1))
\\& \ \ \ \ \ \ \ \cdots  \phi^{-1}(\phi(\gamma_{n-1}))\varphi^{-1}(\varphi(u_{n-1}))
\\& = \varphi^{-1}((\varphi(x + y) - \varphi(x) - \varphi(y))
\phi(\gamma_1)\varphi(u_1)\cdots  \phi(\gamma_{n-1})\varphi(u_{n-1}))
\\& = f(x\gamma_1u_1 \cdots  \gamma_{n-1}u_{n-1}, \gamma, y\gamma_1u_1 \cdots  \gamma_{n-1}u_{n-1}).
\end{aligned}
$$
Similarly, we prove
$$u_1\gamma_1 \cdots  u_{n-1}\gamma{n-1}f(x, \gamma, y) = f(u_1\gamma_1 \cdots  u_{n-1}\gamma_{n-1}x, \gamma, u_1\gamma_1 \cdots  u_{n-1}\gamma_{n-1}y).$$
By Theorem \ref{main}, we have $f(x, \gamma, y) = 0$, for all $x, y \in \M$ and $\gamma \in \Gamma$, which
implies $\varphi(x + y) = \varphi(x) + \varphi(y)$ for all $x, y \in \M$.
\end{proof}

\begin{corollary}
Let $\M$ be a prime $\Gamma$-ring containing a $\gamma_1$-idempotent and a
$\gamma_1$-unity element, where $\gamma_1 \in \Gamma$. Then any $n$-multiplicative isomorphism $\left(\varphi, \phi\right)$ of $\M$ onto an arbitrary gamma ring is additive.
\end{corollary}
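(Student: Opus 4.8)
The plan is to derive this corollary directly from Theorem \ref{teo1} by checking that the hypotheses $(i)$--$(iv)$ of Theorem \ref{passbru} hold automatically in a prime $\Gamma$-ring carrying a nontrivial $\gamma_1$-idempotent $e_1$ and a $\gamma_1$-unity element $1$. Concretely, I would take the index set $\Lambda$ to be the singleton $\{1\}$, so that the required family reduces to the single idempotent $e_1$ (which we take to be nontrivial, i.e. $e_1 \neq 1$, as the statement distinguishes it from the unity), together with the distinguished element $\gamma_1 \in \Gamma$. Once $(i)$--$(iv)$ are verified, Theorem \ref{teo1} applies verbatim.

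The easy conditions are $(i)$ and $(ii)$, both powered by the unity. For $(i)$ I would set $f_1 := 1 - e_1$ and define the $\M$-additive maps by $f_1(\beta,a) = f_1\beta a$ and $f'_1(a,\beta) = a\beta f_1$. Using $1\gamma_1 a = a\gamma_1 1 = a$ one checks $f_1\gamma_1 a = a - e_1\gamma_1 a$ and $a\gamma_1 f_1 = a - a\gamma_1 e_1$, so the defining identities hold; moreover $1_1 = e_1 + f_1 = 1$, and the associativity relation $(a\beta f_1)\gamma b = a\beta(f_1\gamma b)$ is immediate from axiom $(iv)$ of the $\Gamma$-ring. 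Condition $(ii)$ is even more direct: if $x\Gamma\M = 0$ then taking $\beta = \gamma_1$ and multiplying by the unity gives $x\gamma_1 1 = x = 0$.

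The substantive part is conditions $(iii)$ and $(iv)$, where I would invoke the equivalent characterization of primeness recalled above, namely that $a\Gamma\M\Gamma b = 0$ forces $a = 0$ or $b = 0$. For $(iii)$, the hypothesis $e_1\Gamma\M\Gamma x = 0$ together with $e_1 \neq 0$ yields $x = 0$ at once. For $(iv)$, writing $w := e_1\gamma_1 x\gamma_1 e_1$, the hypothesis $w\Gamma\M\Gamma(1_1 - e_1) = 0$ reads $w\Gamma\M\Gamma(1 - e_1) = 0$; here the nontriviality of $e_1$ guarantees $1 - e_1 \neq 0$, so primeness concludes $w = 0$. This is the step that genuinely uses both the prime hypothesis and the nontriviality of the idempotent, and it is where I expect the only real subtlety to lie: one must verify that $1 - e_1$ is nonzero, which is precisely what nontriviality supplies.

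With $(i)$--$(iv)$ established, the ring $\M$ satisfies the hypotheses of Theorem \ref{teo1}, and that theorem yields that every $n$-multiplicative isomorphism $(\varphi, \phi)$ of $\M$ onto an arbitrary gamma ring is additive, completing the proof.
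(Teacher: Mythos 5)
Your proposal is correct and follows exactly the route the paper intends: the paper states this corollary immediately after Theorem \ref{teo1} with no written proof, leaving implicit precisely the verification you carry out, namely realizing $f_1$ as the element $1-e_1$ for condition $(i)$, using the unity for $(ii)$, and using the primeness characterization ($a\Gamma\M\Gamma b=0$ implies $a=0$ or $b=0$) together with $e_1\neq 0$ and $1-e_1\neq 0$ for $(iii)$ and $(iv)$. Your observation that the idempotent must be read as nontrivial (otherwise $1_1-e_1=0$ and condition $(iv)$ collapses) is a genuine and correctly handled point that the paper glosses over.
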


As a consequence of Theorem \ref{teo1}, we have the following Corollary that has been proven by Ferreira and Ferreira in \cite{bruth1}.

\begin{corollary}$\left[\cite{bruth1}, \mbox{Theorem} \ 2.1\right]$
Let $\mathfrak{M}$ be a $\Gamma$-ring containing a family $\{e_{\alpha}|\alpha \in \Lambda\}$ of nontrivial $\gamma _{\alpha}$-idempotents which satisfies:
\begin{enumerate}
\item[\it (i)] If $x\in \mathfrak{M}$ is such that $x\Gamma \mathfrak{M}=0,$ then $x = 0$;
\item[\it (ii)] If $x\in \mathfrak{M}$ is such that $e_{\alpha}\Gamma\mathfrak{M}\Gamma x = 0$ for all $\alpha\in \Lambda,$ then $x = 0$ (and hence $\mathfrak{M}\Gamma x=0$ implies $x = 0$);
\item[\it (iii)] For each $\alpha \in \Lambda$ and $x\in \mathfrak{M},$ if $(e_{\alpha} \gamma _{\alpha} x \gamma _{\alpha} e_{\alpha})\Gamma\mathfrak{M}\Gamma(1_{\alpha}-e_{\alpha})=0$ then $e_{\alpha} \gamma _{\alpha} x \gamma _{\alpha} e_{\alpha} = 0.$
\end{enumerate}
Then any multiplicative isomorphism  $(\varphi, \phi)$ of $\mathfrak{M}$ onto an arbitrary gamma ring is additive.
\end{corollary}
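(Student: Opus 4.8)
The plan is to deduce this corollary as the special case $n = 2$ of Theorem \ref{teo1}. First I would observe that a multiplicative isomorphism is exactly a $2$-multiplicative isomorphism: specializing condition (iii) in the definition of an $n$-multiplicative isomorphism to $n = 2$ collapses it to $\phi(x_1\gamma_1 x_2) = \phi(x_1)\varphi(\gamma_1)\phi(x_2)$, which is precisely the defining identity of a multiplicative isomorphism. Hence the pair $(\varphi, \phi)$ in the hypothesis is a $2$-multiplicative isomorphism of $\M$ onto the given gamma ring, and it suffices to check that $\M$ meets the standing hypotheses of Theorem \ref{teo1}.

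Next I would match the hypotheses. Theorem \ref{teo1} requires conditions (i)--(iv) of Theorem \ref{passbru}. The three conditions (i), (ii), (iii) of the corollary are, word for word, conditions (ii), (iii), (iv) of Theorem \ref{passbru}, so these transfer immediately. The only remaining ingredient is condition (i) of Theorem \ref{passbru}: the existence, for each $\alpha \in \Lambda$, of the $\M$-additive maps $f_\alpha$ and $f'_\alpha$ that produce the formal unit $1_\alpha = e_\alpha + f_\alpha$ and the Peirce decomposition relative to $e_\alpha$. This is exactly where the assumption that each $e_\alpha$ is a \emph{nontrivial} $\gamma_\alpha$-idempotent enters: the construction carried out in the Preliminaries for a single nontrivial $\gamma_1$-idempotent applies verbatim to every member of the family, furnishing the maps $f_\alpha, f'_\alpha$ and the direct-sum decomposition $\M = \M_{11} \oplus \M_{12} \oplus \M_{21} \oplus \M_{22}$ together with its multiplicative relations.

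With all four conditions of Theorem \ref{passbru} in force and $(\varphi, \phi)$ recognized as a $2$-multiplicative isomorphism, I would then simply invoke Theorem \ref{teo1} with $n = 2$ to conclude that $\phi(x + y) = \phi(x) + \phi(y)$ for all $x, y \in \M$, that is, that $(\varphi, \phi)$ is additive; no further computation is needed, since all the work already resides in the proof of Theorem \ref{main}. The step I expect to require the most care is precisely the verification of condition (i) of Theorem \ref{passbru}, because the corollary lists only three conditions explicitly: one must make sure that the Peirce-decomposition machinery is genuinely available for each nontrivial $\gamma_\alpha$-idempotent, as it is this decomposition that the intermediate Claims underpinning Theorem \ref{main} (and hence Theorem \ref{teo1}) exploit. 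Once that availability is confirmed, the corollary is immediate.
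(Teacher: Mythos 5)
Your overall strategy coincides with the paper's: the paper gives this corollary no independent proof, presenting it simply as the case $n=2$ of Theorem \ref{teo1}, and your first two steps --- that a multiplicative isomorphism is by definition a $2$-multiplicative isomorphism, and that conditions (i)--(iii) of the corollary are word for word conditions (ii)--(iv) of Theorem \ref{passbru} --- are exactly that reduction.

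The gap is in your third step, which you yourself single out as the crux. You assert that nontriviality of each $e_\alpha$ lets ``the construction carried out in the Preliminaries'' furnish the maps $f_\alpha$, $f'_\alpha$ required by condition (i) of Theorem \ref{passbru}. But the Preliminaries contain no such construction: the maps $e_2$, $e_2'$ are introduced there as additional data (``Let us consider \ldots two $\mathfrak{M}$-additive maps verifying \ldots'') together with the postulate $(a\alpha e_2)\beta b = a\alpha(e_2\beta b)$; their existence is a standing hypothesis, not a consequence of having a nontrivial idempotent. Nor can it be derived. The values $f_\alpha(\beta,a)$ for $\beta\neq\gamma_\alpha$ are not determined by $e_\alpha$, and the obvious candidate $f_\alpha(\beta,a)=a-e_\alpha\beta a$, $f'_\alpha(a,\beta)=a-a\beta e_\alpha$ fails the required identity $(a\beta f_\alpha)\gamma b=a\beta(f_\alpha\gamma b)$: its two sides become $a\gamma b-a\beta e_\alpha\gamma b$ and $a\beta b-a\beta e_\alpha\gamma b$, which agree only if $a\gamma b=a\beta b$ for all $\beta,\gamma$. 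In fact, setting $\gamma=\gamma_\alpha$ in the identity shows that the existence of admissible $f_\alpha, f'_\alpha$ forces, for every $a\in\mathfrak{M}$ and $\beta\in\Gamma$, the solvability of $u\gamma_\alpha b=a\beta b$ for all $b$ --- a genuine structural condition on $\mathfrak{M}$ that nontriviality of an idempotent does not provide. The honest repair is different: condition (iii) of the corollary already mentions $1_\alpha$, a symbol that is meaningless unless the maps $f_\alpha$ (and hence $1_\alpha=e_\alpha+f_\alpha$) have been postulated, so the corollary, as transcribed from \cite{bruth1}, tacitly carries the Peirce-decomposition convention of the Preliminaries; that is, condition (i) of Theorem \ref{passbru} is an implicit hypothesis, not something to be proved from nontriviality. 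With that reading made explicit, your application of Theorem \ref{teo1} with $n=2$ goes through as you describe.
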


As an Corollary of our main result, we obtain a very short proof of a result
of Ferreira \cite{bruth3} as follows.

\begin{corollary}$\left[\cite{bruth3}, \mbox{Theorem} \ 2.1\right]$
Let $\M, \M'$, $\Gamma$ and $\Gamma'$ be additive groups such that $\M$ is a $\Gamma$-ring and $\M'$ is a $\Gamma'$-ring. Suppose that $\M$ contain a family
$\left\{e_\alpha ~|~ \alpha \in \Lambda \right\}$ of $\gamma_\alpha$-idempotents which satisfies:

\begin{enumerate}

\item [(i)] For each $\alpha \in \Lambda$ there are two $\M$-additive maps $f_\alpha\colon  \Gamma \times \M \rightarrow \M$, $f'_{\alpha}\colon  \M \times \Gamma \rightarrow \M$ satisfying $f_{\alpha}(\gamma_{\alpha}, a) = a - e_{\alpha}\gamma_{\alpha}a$, $f'_{\alpha}(a, \gamma_{\alpha}) = a - a\gamma_{\alpha}e_{\alpha}$, for all $a \in \M$, such that if we denote $f_{\alpha}\beta a = f_{\alpha}(\beta, a)$, $a\beta f_{\alpha} =f'_{\alpha}(a, \beta)$, $1_{\alpha}\beta a = e_{\alpha}\beta a+f_{\alpha} \beta a$, $a\beta 1_{\alpha} = a\beta e_{\alpha} + a\beta f_{\alpha}$ (allowing us to write $1_{\alpha} = e_{\alpha} + f_{\alpha})$, then $(a \beta f_{\alpha}) \gamma b = a \beta (f_{\alpha} \gamma b)$ for all $\beta, \gamma \in \Gamma$ and $a, b \in \M$;
\item [(ii)] If $x \in \M$ is such that $x\Gamma \M = 0$, then $x = 0$;
\item [(iii)] If $x \in \M$ is such that $e_{\alpha}\Gamma \M \Gamma x = 0$ for all $\alpha \in \Lambda$, then $x = 0$ (and hence $\M \Gamma x = 0$ implies $x = 0$);
\item [(iv)] For each $\alpha \in \Lambda$ and $x \in \M$, if $(e_{\alpha}\gamma_{\alpha}x \gamma_{\alpha} e_{\alpha})\Gamma \M \Gamma (1_{\alpha} - e_{\alpha}) =0$ then $e_{\alpha}\gamma_{\alpha}x \gamma_{\alpha}e_{\alpha} = 0$.

\end{enumerate}

Then every surjective elementary map $(M, M^{*})$ of $\M \times \M'$ is additive.
\end{corollary}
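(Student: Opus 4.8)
The plan is to reduce everything to Theorem~\ref{main}, exactly as in the proof of Theorem~\ref{teo1}: I will build a single function $f\colon\M\times\Gamma\times\M\to\M$ recording the non-additivity of $M$ and show it satisfies hypotheses (v)--(vii), so that $f=0$ forces $M$ to be additive. Recall that an elementary map $(M,M^{*})$ of $\M\times\M'$ consists of maps $M\colon\M\to\M'$ and $M^{*}\colon\M'\to\M$, carrying a bijection of $\Gamma$ onto $\Gamma'$ (I write $\gamma'$ for the image of $\gamma$), subject to the two elementary identities
$$M\big(a\gamma M^{*}(x)\delta b\big)=M(a)\gamma' x\delta' M(b),\qquad M^{*}\big(x\gamma' M(a)\delta' b\big)=M^{*}(x)\gamma a\delta M^{*}(b),$$
and that $(M,M^{*})$ being additive means both $M$ and $M^{*}$ are additive. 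Since the two identities are interchanged by swapping $M\leftrightarrow M^{*}$ and $\Gamma\leftrightarrow\Gamma'$, it suffices to prove that $M$ is additive.

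First I would dispose of the preliminary facts. Setting $a=b=x=0$ in the first identity gives $M(0)=M(0)\gamma'\,0\,\delta'M(0)=0$, and likewise $M^{*}(0)=0$. Next I would promote $M$ and $M^{*}$ to bijections. Assuming $M(s)=M(t)$, the second identity yields $M^{*}(x)\gamma s\delta M^{*}(b)=M^{*}(x)\gamma t\delta M^{*}(b)$ for all $x,b\in\M'$; since $M^{*}$ is onto, $u\gamma(s-t)\delta w=0$ for all $u,w\in\M$, whence $\M\Gamma[(s-t)\Gamma\M]=0$, then $(s-t)\Gamma\M=0$ by condition (iii), and finally $s-t=0$ by condition (ii). Thus $M$ (and symmetrically $M^{*}$) is bijective, so $M^{-1}$ is available. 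Applying $M^{-1}$ to the first identity gives the intertwining rule
$$M^{-1}\big(p\,\gamma' z\,\delta' M(w)\big)=M^{-1}(p)\,\gamma\,M^{*}(z)\,\delta\,w,$$
valid for all $p,z\in\M'$ and $w\in\M$, together with its left-hand mirror image $M^{-1}\big(M(v)\gamma' z\delta' p\big)=v\,\gamma\,M^{*}(z)\,\delta\,M^{-1}(p)$.

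With these in hand I would set $f(x,\gamma,y)=M^{-1}\big(M(x+y)-M(x)-M(y)\big)$, which lands in $\M$. Condition (v) is immediate from $M(0)=0$. For (vi) and (vii) I would invoke Theorem~\ref{main} with $k=2$. To verify (vii), observe that any $u_{1}\in\M$ equals $M^{*}(z)$ for some $z\in\M'$ because $M^{*}$ is onto; placing this factor next to $f$ and using the intertwining rule gives $M^{-1}(\Delta)\gamma_{1}M^{*}(z)\gamma_{2}u_{2}=M^{-1}\big(\Delta\,\gamma_{1}'z\,\gamma_{2}'M(u_{2})\big)$, where $\Delta=M(x+y)-M(x)-M(y)$. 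Expanding $\Delta$ and reassembling each term through the first elementary identity, then using biadditivity of the $\Gamma$-multiplication, identifies the right-hand side with $f(x\gamma_{1}u_{1}\gamma_{2}u_{2},\gamma,y\gamma_{1}u_{1}\gamma_{2}u_{2})$; condition (vi) follows from the mirror computation. Theorem~\ref{main} then yields $f=0$, i.e. $M(x+y)=M(x)+M(y)$, and the symmetric argument makes $M^{*}$ additive.

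The main obstacle I anticipate is the bijectivity step, since it is the only place the nondegeneracy hypotheses (ii)--(iii) are genuinely consumed and it quietly relies on $M^{*}$ being onto; once $M$ and $M^{*}$ are known bijective, the verification of (v)--(vii) is purely formal. A secondary point requiring care is the bookkeeping of the $\Gamma\to\Gamma'$ bijection, so that the primed and unprimed $\gamma$'s match the shape of conditions (vi) and (vii), together with the recognition that the correct value is $k=2$, reflecting the three-factor pattern $a\,\gamma\,M^{*}(x)\,\delta\,b$ of the elementary identities.
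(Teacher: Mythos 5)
Your core reduction is sound and is essentially the paper's own argument: the paper likewise sets $f(x,\gamma,y)=M^{-1}\bigl(M(x+y)-M(x)-M(y)\bigr)$, writes the middle factor as $M^{*}$ of something (using surjectivity of $M^{*}$), pushes the difference through the first elementary identity, and applies Theorem~\ref{main} with $k=2$ to conclude that $M$ is additive. Your direct proofs of $M(0)=0$ and of the injectivity of $M$ (where the paper simply cites Lemmas 2.1 and 2.2 of \cite{bruth3}) are also correct, and you rightly note that only surjectivity of $M^{*}$ is needed in this half.

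The genuine gap is your opening reduction: ``since the two identities are interchanged by swapping $M\leftrightarrow M^{*}$ and $\Gamma\leftrightarrow\Gamma'$, it suffices to prove that $M$ is additive.'' The swap turns $(M,M^{*})$ into an elementary map of $\M'\times\M$, whose \emph{first} component has domain $\M'$; but conditions (i)--(iv) are hypotheses on $\M$ alone, and nothing whatsoever is assumed about $\M'$. So the statement you actually prove (``the component with domain $\M$ is additive'') does not transfer to the swapped pair, and the additivity of $M^{*}$ --- which is half of the conclusion --- is never established. The same false symmetry infects your bijectivity step: your injectivity argument for $M$ consumes conditions (ii)--(iii) of $\M$, and the ``symmetric'' argument for $M^{*}$ would need those conditions for $\M'$, which are unavailable. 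Both defects are repairable, but not by symmetry. For injectivity of $M^{*}$: if $M^{*}(p)=M^{*}(q)$, write $p-q=M(c)$ (surjectivity of $M$); the second identity gives $M^{*}(x)\gamma c\delta M^{*}(y)=M^{*}\bigl(x\gamma' M(c)\delta' y\bigr)$, and since the first identity plus surjectivity of $M$ forces $\M'\Gamma'(p-q)\Gamma'\M'=0$, one gets $\M\Gamma c\Gamma\M=0$, hence $c=0$ by (iii) and (ii), so $p=q$. For additivity of $M^{*}$, the paper's route (via \cite{bruth3}, Lemma 2.3) is the correct one: verify that $\bigl({M^{*}}^{-1},M^{-1}\bigr)$ is again a surjective elementary map of $\M\times\M'$ --- same orientation, first component with domain $\M$ --- apply the part already proven to that pair to get ${M^{*}}^{-1}$ additive, and then $M^{*}$ is additive as the inverse of an additive bijection.
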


\begin{proof}
In view of Ferreira [\cite{bruth3}, Lemma 2.2], we know that both $\M$
and $\M^{*}$ are bijective maps. For $x, y \in \M$ and $\gamma \in \Gamma$, we set $$f(x, \gamma, y) = M^{-1}(M(x + y) - M(x) - M(y)).$$
Also by Ferreira [\cite{bruth3}, Lemma 2.1], we have $M(0) = 0'$ and so
$M^{-1}(0') = 0$. Thus $f(x, \gamma, 0) = 0 = f(0, \gamma, x)$ for all $x \in \M$. For any $x, y, u, v \in
\M$ and $\beta, \delta \in \Gamma$ by assumption, we have
$$
\begin{aligned}
&M(f(x\beta u \delta v, \gamma, y\beta u \delta v))
\\&= M((x + y)\beta u \delta v)- M(x\beta u\delta v) - M(y\beta u\delta v)
\\& = M((x + y)\beta M^{*}{M^{*}}^{-1}(u)\delta v)-M(x\beta M^{*}{M^{*}}^{-1}(u)\delta v) - M(y\beta M^{*}{M^{*}}^{-1}(u) \delta v)
\\& = M(x + y)\phi(\beta){M^{*}}^{-1}(u)\phi(\delta)M(v)-M(x)\phi(\beta){M^{*}}^{-1}(u)\phi(\delta)M(v) 
\\&- M(y)\phi(\beta){M^{*}}^{-1}(u)\phi(\delta)M(v)
\\&=(M(x + y) - M(x) - M(y))\phi(\beta){M^{*}}^{-1}(u)\phi(\delta)M(v)
\\&= M(f(x, \gamma, y))\phi(\beta){M^{*}}^{-1}(u)\phi(\delta)M(v)
\\&= M(f(x, \gamma, y)\beta u\delta v).
\end{aligned}
$$
Hence $f(x\beta u \delta v, \gamma, y\beta u\delta ) = f(x, \gamma, y)\beta u \delta v$. 

Similarly, we prove $f(u \beta v \delta x, \gamma, u \beta v \delta y) =
u\beta v \delta f(x, \gamma, y)$. By Theorem 2.2, we have $f = 0$ and so $M$ is additive. Since
$({M^{*}}^{-1}, M^{-1})$ is also an elementary map of $\M \times \M'$
[\cite{bruth3}, Lemma 2.3], we also get
that ${M^{*}}^{-1}$ is additive and so $M^{*}$ is additive. This complete the proof.
\end{proof}

We now discuss the additivity of $n$-multiplicative derivation.

\begin{corollary}\label{deriv}
Let $\M$ and $\Gamma$ be additive groups such that $\M$ is a $\Gamma$-ring. Suppose that $\M$ contain a family $\left\{e_{\alpha} ~|~ \alpha \in \Lambda \right\}$ of $\gamma_{\alpha}$-idempotents which satisfies:

\begin{enumerate}

\item [(i)] For each $\alpha \in \Lambda$ there are two $\M$-additive maps $f_\alpha\colon  \Gamma \times \M \rightarrow \M$, $f'_{\alpha}\colon  \M \times \Gamma \rightarrow \M$ satisfying $f_{\alpha}(\gamma_{\alpha}, a) = a - e_{\alpha}\gamma_{\alpha}a$, $f'_{\alpha}(a, \gamma_{\alpha}) = a - a\gamma_{\alpha}e_{\alpha}$, for all $a \in \M$, such that if we denote $f_{\alpha}\beta a = f_{\alpha}(\beta, a)$, $a\beta f_{\alpha} =f'_{\alpha}(a, \beta)$, $1_{\alpha}\beta a = e_{\alpha}\beta a+f_{\alpha} \beta a$, $a\beta 1_{\alpha} = a\beta e_{\alpha} + a\beta f_{\alpha}$ (allowing us to write $1_{\alpha} = e_{\alpha} + f_{\alpha})$, then $(a \beta f_{\alpha}) \gamma b = a \beta (f_{\alpha} \gamma b)$ for all $\beta, \gamma \in \Gamma$ and $a, b \in \M$;
\item [(ii)] If $x \in \M$ is such that $x\Gamma \M = 0$, then $x = 0$;
\item [(iii)] If $x \in \M$ is such that $e_{\alpha}\Gamma \M \Gamma x = 0$ for all $\alpha \in \Lambda$, then $x = 0$ (and hence $\M \Gamma x = 0$ implies $x = 0$);
\item [(iv)] For each $\alpha \in \Lambda$ and $x \in \M$, if $(e_{\alpha}\gamma_{\alpha}x \gamma_{\alpha} e_{\alpha})\Gamma \M \Gamma (1_{\alpha} - e_{\alpha}) =0$ then $e_{\alpha}\gamma_{\alpha}x \gamma_{\alpha}e_{\alpha} = 0$.

\end{enumerate}
Then any $n$-multiplicative derivation $d$ of $\M$ is additive.
\end{corollary}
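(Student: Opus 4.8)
The plan is to imitate the proof of Theorem \ref{teo1}, but with the additivity defect of $d$ playing the role that the map built from $\varphi^{-1}$ played there, and then to invoke Theorem \ref{main} with $k = n-1$. Concretely, I would set
$$f(x, \gamma, y) = d(x + y) - d(x) - d(y)$$
for all $x, y \in \M$ and $\gamma \in \Gamma$. Note that this $f$ does not actually depend on $\gamma$, but this causes no difficulty: conditions $(v)$--$(vii)$ of Theorem \ref{main} are simply required to hold for every $\gamma$, and a $\gamma$-independent $f$ satisfies them as readily as any other. Proving $f = 0$ is exactly the assertion that $d$ is additive, so once the three conditions are checked the conclusion is immediate from Theorem \ref{main}.

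First I would dispose of condition $(v)$. Evaluating the defining Leibniz formula of the derivation at $x_1 = \cdots = x_n = 0$, each summand on the right still contains at least one factor equal to $0$, since replacing a single entry by $d(0)$ leaves $n - 1 \geq 1$ untouched zero entries (here $n > 1$ is used). Hence $d(0) = 0$, and therefore $f(x, \gamma, 0) = d(x) - d(x) - d(0) = 0$, and symmetrically $f(0, \gamma, x) = 0$.

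The core of the argument is verifying conditions $(vi)$ and $(vii)$ with $k = n-1$. For $(vii)$ I would fix right multiplier data $\gamma_1, u_1, \ldots, \gamma_{n-1}, u_{n-1}$ and expand, for an arbitrary $z \in \M$,
$$d(z\gamma_1 u_1 \cdots \gamma_{n-1} u_{n-1}) = d(z)\gamma_1 u_1 \cdots \gamma_{n-1} u_{n-1} + \sum_{i=1}^{n-1} z\gamma_1 u_1 \cdots \gamma_i d(u_i) \cdots \gamma_{n-1} u_{n-1}$$
by the derivation formula. The decisive observation is that each ``tail'' term $z\gamma_1 u_1 \cdots d(u_i) \cdots u_{n-1}$ is additive in its leftmost slot $z$, because the $\Gamma$-multiplication is biadditive. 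Substituting $z = x+y$, $z = x$, $z = y$ in turn and forming $d((x+y)\gamma_1 u_1 \cdots) - d(x\gamma_1 u_1 \cdots) - d(y\gamma_1 u_1 \cdots)$, every tail term cancels, leaving exactly $(d(x+y) - d(x) - d(y))\gamma_1 u_1 \cdots \gamma_{n-1} u_{n-1} = f(x, \gamma, y)\gamma_1 u_1 \cdots \gamma_{n-1} u_{n-1}$, which is condition $(vii)$. Condition $(vi)$ follows from the symmetric computation placing the product on the left.

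With $(v)$--$(vii)$ established, Theorem \ref{main} gives $f = 0$, that is, $d(x+y) = d(x) + d(y)$ for all $x, y \in \M$, which is the claimed additivity. I expect the only genuinely delicate point to be the bookkeeping of the Leibniz expansion: one must apply $d$ to the \emph{same} multiplier data in all three terms so that the $n-1$ derivative-on-$u_i$ contributions line up and cancel. That cancellation rests entirely on the biadditivity of the product, so no further structural hypotheses on $\M$ are needed beyond those already supplying conditions $(i)$--$(iv)$ for the application of Theorem \ref{main}.
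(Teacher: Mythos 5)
Your proposal is correct and follows essentially the same route as the paper: define the additivity defect $f(x,\gamma,y) = d(x+y) - d(x) - d(y)$, verify conditions $(v)$--$(vii)$ of Theorem \ref{main} (which the paper dismisses as ``easy to check,'' mislabeling them as $(ii)$ and $(iii)$), and conclude $f = 0$. Your write-up actually supplies the Leibniz-expansion and tail-cancellation details that the paper omits, so it is a faithful, slightly more complete version of the same argument.
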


\begin{proof}
By our assummption, we easily see that $d(0) = 0$. For any $x, y \in \M$ and
$\gamma \in \Gamma$, we set $f(x, \gamma, y) = d(x+y)-d(x)-d(y)$. Thus $f(x, \gamma, 0) = 0 = f(0, \gamma, x)$ for all $x \in \M$. Futhermore, it is easy to check that the conditions $(ii)$ and $(iii)$
in Theorem \ref{main} are met. So the result follows from Theorem \ref{main}.
\end{proof}

\begin{corollary}
Let $\M$ be a prime $\Gamma$-ring containing a $\gamma_1$-idempotent and a
$\gamma_1$-unity element, where $\gamma_1 \in \Gamma$. Then any $n$-multiplicative derivation $d$ of $\M$ is additive.
\end{corollary}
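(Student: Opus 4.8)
The plan is to obtain this statement as a direct specialization of Corollary \ref{deriv}. I would take the indexing family to consist of the single given nontrivial $\gamma_1$-idempotent $e_1$ (that is, set $\Lambda = \{1\}$), verify that hypotheses $(i)$--$(iv)$ of that corollary are satisfied, and then invoke it to conclude that every $n$-multiplicative derivation is additive. Throughout I would write $1$ for the given $\gamma_1$-unity element, so that $1\gamma_1 a = a\gamma_1 1 = a$ for all $a \in \M$, and I would exploit primeness through the equivalence recorded in the first theorem of Section~1: in a prime $\Gamma$-ring, $a\Gamma\M\Gamma b = 0$ forces $a = 0$ or $b = 0$.

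For condition $(i)$ I would define the auxiliary maps by $f_1(\beta, a) = 1\beta a - e_1\beta a = (1 - e_1)\beta a$ and $f'_1(a, \beta) = a\beta 1 - a\beta e_1 = a\beta(1 - e_1)$. Distributivity in the $\Gamma$-ring makes these $\M$-additive, and evaluating at $\beta = \gamma_1$ gives $f_1(\gamma_1, a) = a - e_1\gamma_1 a$ and $f'_1(a, \gamma_1) = a - a\gamma_1 e_1$, exactly as demanded, since $1\gamma_1 a = a\gamma_1 1 = a$. Writing $f_1 = 1 - e_1$, one then computes $1_1 = e_1 + f_1 = 1$, and the required relation $(a\beta f_1)\gamma b = a\beta(f_1\gamma b)$ is a single instance of the associativity axiom $(x\alpha y)\beta z = x\alpha(y\beta z)$ applied to $x = a$, $y = 1 - e_1$, $z = b$. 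This verification carries the most notational bookkeeping, so I expect it to be the main (if entirely routine) obstacle.

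For the remaining conditions I would lean on primeness and nontriviality. Condition $(ii)$ is immediate from the unity: if $x\Gamma\M = 0$, then $x = x\gamma_1 1 \in x\Gamma\M = 0$. For condition $(iii)$, with $\Lambda = \{1\}$ the hypothesis reads $e_1\Gamma\M\Gamma x = 0$, and since $e_1 \neq 0$ the prime characterization yields $x = 0$. For condition $(iv)$, from $(e_1\gamma_1 x\gamma_1 e_1)\Gamma\M\Gamma(1 - e_1) = 0$ the same characterization gives either $e_1\gamma_1 x\gamma_1 e_1 = 0$ or $1 - e_1 = 0$; the latter is excluded because $e_1$ is nontrivial, i.e. $e_1 \neq 1$, whence $e_1\gamma_1 x\gamma_1 e_1 = 0$. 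It is precisely here that nontriviality of the idempotent is used. With $(i)$--$(iv)$ confirmed, Corollary \ref{deriv} applies and delivers the additivity of every $n$-multiplicative derivation of $\M$.
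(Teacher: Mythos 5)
Your proposal is correct and is precisely the argument the paper intends: the paper states this corollary without proof as an immediate specialization of Corollary~\ref{deriv}, and your choices $\Lambda=\{1\}$, $f_1\beta a=(1-e_1)\beta a$, $a\beta f'_1=a\beta(1-e_1)$, together with the verification of conditions $(i)$--$(iv)$ from the unity element and the prime characterization $a\Gamma\mathfrak{M}\Gamma b=0\Rightarrow a=0$ or $b=0$, fill in exactly the routine details being omitted. One small but worthwhile observation: the corollary literally says ``a $\gamma_1$-idempotent,'' yet your reading of it as \emph{nontrivial} is forced, since with $e_1=1$ condition $(iv)$ would make every element of $\mathfrak{M}$ vanish, so nontriviality is genuinely needed where you invoke $1-e_1\neq 0$.
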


And one last corollary we have proved Ferreira's result in \cite{bruth2}.

\begin{corollary}$\left[\cite{bruth2}, \mbox{Theorem} \ 2.1\right]$
Let $\mathfrak{M}$ be a $\Gamma$-ring containing a family $\{e_{\alpha}|\alpha\in \Lambda\}$ of $\gamma _{\alpha}$-idempotents which satisfies:
\begin{enumerate}
\item[\it (i)] For each $\alpha \in \Lambda$ there are two $\mathfrak{M}$-additive maps $f_{\alpha} \colon \Gamma \times\mathfrak{M}\rightarrow \mathfrak{M}$, \linebreak $f'_{\alpha} \colon \mathfrak{M}\times \Gamma \rightarrow \mathfrak{M}$ satisfying $f_{\alpha} (\gamma_{\alpha}, a)=a-e_{\alpha}\gamma_{\alpha}a$, $f'_{\alpha} (a,\gamma_{\alpha})=a-a\gamma_{\alpha}e_{\alpha}$, for all $a\in \mathfrak{M}$, such that if we denote $f_{\alpha}\beta a=f_{\alpha} (\beta, a)$, $a \beta f_{\alpha}=f'_{\alpha} (a,\beta)$, $1_{\alpha}\beta a=e_{\alpha}\beta a+f_{\alpha}\beta a$, $a\beta 1_{\alpha}=a\beta e_{\alpha}+a\beta f_{\alpha}$ (allowing us to write \linebreak $1_{\alpha}=e_{\alpha}+f_{\alpha}$), then $(a\beta f_{\alpha})\gamma b=a \beta (f_{\alpha}\gamma b)$ for all $a,b\in \mathfrak{M}$ and $\beta, \gamma \in \Gamma$;
\item[\it (ii)] If $x\in \mathfrak{M}$ is such that $x\Gamma \mathfrak{M}=0,$ then $x = 0$;
\item[\it (iii)] If $x\in \mathfrak{M}$ is such that $e_{\alpha}\Gamma\mathfrak{M}\Gamma x = 0$ for all $\alpha\in \Lambda,$ then $x = 0$ (and hence $\mathfrak{M}\Gamma x=0$ implies $x = 0$);
\item[\it (iv)] For each $\alpha \in \Lambda$ and $x\in \mathfrak{M},$ if $(e_{\alpha} \gamma _{\alpha} x \gamma _{\alpha} e_{\alpha})\Gamma\mathfrak{M}\Gamma(1_{\alpha}-e_{\alpha})=0$ then $e_{\alpha} \gamma _{\alpha} x \gamma _{\alpha} e_{\alpha} = 0.$
\end{enumerate}
Then any multiplicative derivation $d$ of $\mathfrak{M}$ is additive.
\end{corollary}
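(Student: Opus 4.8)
The plan is to recognize that this corollary is nothing more than the case $n = 2$ of Corollary \ref{deriv}. Indeed, a multiplicative derivation $d$, defined by $d(x\gamma y) = d(x)\gamma y + x\gamma d(y)$, is precisely an $n$-multiplicative derivation with $n = 2$; and the four hypotheses (i)--(iv) imposed here are word-for-word the conditions (i)--(iv) of Theorem \ref{passbru} that form the standing assumptions of Corollary \ref{deriv}. So the quickest route is simply to invoke Corollary \ref{deriv} with $n = 2$ and conclude that $d$ is additive.

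For a self-contained argument I would instead reduce directly to Theorem \ref{main}. First I would set $f(x, \gamma, y) = d(x + y) - d(x) - d(y)$ for $x, y \in \M$ and $\gamma \in \Gamma$. Since $d(0) = 0$ (which follows at once from the derivation identity applied to a product with a zero factor), condition (v) holds: $f(x, \gamma, 0) = f(0, \gamma, x) = 0$. The core step is to verify conditions (vi) and (vii) of Theorem \ref{main}; because we are in the multiplicative ($n = 2$) setting, it suffices to take $k = 1$, so that (vii) becomes $f(x, \gamma, y)\beta u = f(x\beta u, \gamma, y\beta u)$ and (vi) the left-handed analogue.

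To check (vii) I would expand $f(x\beta u, \gamma, y\beta u) = d((x+y)\beta u) - d(x\beta u) - d(y\beta u)$ and apply the Leibniz rule to each term. The derivation terms landing on $u$ assemble into $((x+y) - x - y)\beta d(u) = 0$, while the remaining terms assemble into $(d(x+y) - d(x) - d(y))\beta u = f(x, \gamma, y)\beta u$; condition (vi) follows symmetrically. With (v), (vi), (vii) in hand, Theorem \ref{main} gives $f = 0$, i.e.\ $d(x + y) = d(x) + d(y)$ for all $x, y$.

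I expect no genuine obstacle: the whole content is the identification of a multiplicative derivation with a $2$-multiplicative derivation, after which everything is either a citation of Corollary \ref{deriv} or the elementary telescoping cancellation above. The only point demanding a line of care is confirming that the derivation-on-$u$ contributions cancel, which they do precisely because $f$ is built from the additive defect $d(x+y) - d(x) - d(y)$.
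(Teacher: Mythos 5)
Your proposal is correct and matches the paper's intent exactly: the paper states this corollary without proof, as the immediate specialization $n=2$ of Corollary \ref{deriv}, which is precisely your first reduction. Your self-contained alternative (setting $f(x,\gamma,y)=d(x+y)-d(x)-d(y)$, checking conditions (v)--(vii) of Theorem \ref{main} with $k=1$ via the Leibniz cancellation, and concluding $f=0$) is also exactly the argument the paper gives for Corollary \ref{deriv} itself, so there is nothing genuinely different here.
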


\end{document}